\documentclass[UTF-8,reqno]{amsart}
\usepackage{enumerate}
\usepackage{verbatim,listings}
\usepackage{amssymb,url,color, booktabs}

\usepackage{mathrsfs}
\usepackage{amsmath}
\usepackage{verbatim}
\usepackage{fancyhdr}
\usepackage[nobysame]{amsrefs}
\BibSpec{article}{%
+{}{\PrintAuthors} {author}
+{,}{ \textrm} {title}
+{.}{ \textit} {journal}
+{,}{ \textbf} {volume}
+{}{ \parenthesize} {date}
+{,}{ } {pages}
+{.}{ arXiv:} {eprint}
+{.}{} {transition}
}
\BibSpec{book}{%
+{}{\PrintAuthors} {author}
+{,}{ \textit} {title}
+{.}{ \textrm} {series} 
+{,}{ Vol.} {volume}
+{.}{ } {publisher}
+{,}{ } {date}
+{.}{} {transition}
}
\usepackage{color}
\usepackage{tikz}
\usepackage{graphicx}
\usepackage[colorlinks=true]{hyperref}
\hypersetup{
    linkcolor=blue,          
    citecolor=red,        
    filecolor=blue,      
    urlcolor=cyan
}

\newcommand{\red}{\color{red}}

\numberwithin{equation}{section}

\newtheorem{theorem}{Theorem}[section]

\newtheorem{lemma}[theorem]{Lemma}
\newtheorem{definition}[theorem]{Definition}
\newtheorem{proposition}[theorem]{Proposition}

\newenvironment{proof of theorem 1.2 and 1.3}{{\it Proof of Theorem 1.2 and 1.3}.}{{\hfill 	
$\square$\hskip - \parfillskip}}
\newenvironment{proof of theorem 1.4}{{\it Proof of Theorem 1.4}.}{{\hfill 	
		$\square$\hskip - \parfillskip}}
\newenvironment{proof of theorem 1.5}{{\it Proof of Theorem 1.5}.}{{\hfill 	
		$\square$\hskip - \parfillskip}}
\newenvironment{proof of theorem 1.6}{{\it Proof of Theorem 1.6}.}{{\hfill 	
		$\square$\hskip - \parfillskip}}

\makeatletter

\newcommand{\Rmnum}[1]{\expandafter\@slowromancap\romannumeral #1@}
\makeatother

\usepackage{caption}
\def\geq{\geqslant}
\def\leq{\leqslant}
\def\ge{\geqslant}
\def\le{\leqslant}

\def\R{\mathbb{R}}
\def\Sn{\mathbb{S}^{n}}
\def\={&\!\!=\!\!&}
\def\al{\alpha}
\def\be{\beta}
\def\ga{\gamma}
\def\pa{\partial}
\def\lbd{\lambda}

\def\ep{\epsilon}

\def\Ctheta{\mathcal{C_{\theta}}}

\allowdisplaybreaks

	\tikzset{
	pattern size/.store in=\mcSize,
	pattern size = 5pt,
	pattern thickness/.store in=\mcThickness,
	pattern thickness = 0.3pt,
	pattern radius/.store in=\mcRadius,
	pattern radius = 1pt}
\makeatletter
\pgfutil@ifundefined{pgf@pattern@name@_ak5ydppfi}{
	\pgfdeclarepatternformonly[\mcThickness,\mcSize]{_ak5ydppfi}
	{\pgfqpoint{0pt}{0pt}}
	{\pgfpoint{\mcSize+\mcThickness}{\mcSize+\mcThickness}}
	{\pgfpoint{\mcSize}{\mcSize}}
	{
		\pgfsetcolor{\tikz@pattern@color}
		\pgfsetlinewidth{\mcThickness}
		\pgfpathmoveto{\pgfqpoint{0pt}{0pt}}
		\pgfpathlineto{\pgfpoint{\mcSize+\mcThickness}{\mcSize+\mcThickness}}
		\pgfusepath{stroke}
}}
\makeatother
\tikzset{every picture/.style={line width=0.75pt}} 

\begin{document}
\title{The capillary $L_{p}$ dual Christoffel-Minkowski problem for $1<p<q\leq k+1$ with $1\leq k\leq n$}\thanks{\it {The research is partially supported by NSFC (No. 12261105).}}

\thanks{{\it 2020 Mathematics Subject Classification. 35J66, 53C42, 53C45, 35J60.}}
\thanks{{\it Keywords: $L^{p}$-dual Christoffel-Minkowski problem, Capillary hypersurface, Constant rank theorem}}
\author{Guanghan Li}
\address{School of Mathematics and Statistics\\                	
	Wuhan University\\                Wuhan 430072, P. R. China}
\email{ghli@whu.edu.cn}
\author{Mengliang Liu$^{*}$}\thanks{* Corresponding author}
\address{School of Mathematics and Statistics\\                	
	Wuhan University\\                Wuhan 430072, P. R. China}
\email{liumengliang@whu.edu.cn}

\begin{abstract}
This paper is concerned with a capillary $L_p$ Christoffel--Minkowski-type problem involving the prescription of a class of $L_p$ geometric measures formed by the $k$-th capillary area measure and the $q$-th dual curvature measure. By establishing a gradient estimate, we prove the existence of an even, smooth, strictly convex solution in the parameter range $1<p<q\leq k+1$, where $1\leq k\leq n$.
\end{abstract}

\maketitle
\setcounter{tocdepth}{2}

\section{Introduction}
The Brunn--Minkowski theory connects convex geometry, geometric analysis, and fully nonlinear elliptic equations. At its core are the surface area measures and their higher-order analogues, which encode curvature via the Gauss map. The inverse problem of prescribing these measures is the Christoffel--Minkowski problem. In the smooth strictly convex setting, with support function $h$, the problem reduces to
\begin{equation}\label{eq:classical-CM}
\sigma_k(\nabla^2 h + h g_{\Sn})=\varphi \quad\text{on }\Sn,
\end{equation}
$1\leq k\leq n$, with $\sigma_k$ the $k$-th elementary symmetric function. Cases $k=1$ and $k=n$ are the Christoffel and Minkowski problems, classical since \cite{Christoffel1865,Minkowski1897,Aleksandrov1956,Firey1967,Firey1968,Nirenberg1953,Pogorelov1978,ChengYau1976,Schneider2014}. For $1<k<n$, Guan--Ma \cite{GuanMa2003} showed that spherical convexity of $\varphi^{-1/k}$ implies strict convexity of solutions, with further results in \cite{GuanLinMa2006,GuanMaZhou2006}.

The $L_p$-Brunn--Minkowski theory by Lutwak \cite{Lutwak1993} replaces surface area measures by $L_p$ counterparts, yielding
\begin{equation}\label{eq:Lp-CM}
\sigma_k(\nabla^2 h+h g_{\Sn})=h^{p-1}\varphi \quad\text{on }\Sn,
\end{equation}
the $L_p$-Christoffel--Minkowski problem. For $k=n$, this is the $L_p$-Minkowski problem, while intermediate cases were studied in \cite{HuMaShen2004} for $p>k+1$ and \cite{GuanXia2018} for $1<p<k+1$.

The dual Brunn--Minkowski theory by Huang--Lutwak--Yang--Zhang \cite{HuangLutwakYangZhang2016} introduced dual curvature measures. Lutwak--Yang--Zhang \cite{LutwakYangZhang2018} unified both theories via $L_p$-dual curvature measures, leading to
\begin{equation}\label{eq:closed-Lp-dual-CM}
\sigma_k(\nabla^2 h+h g_{\Sn})=f h^{p-1}(h^2+|\nabla h|^2)^{\frac{k+1-q}{2}} \quad\text{on }\Sn,
\end{equation}
with $L_p$-parameter $p$ and dual parameter $q$. Special cases include \eqref{eq:classical-CM} when $(p,q)=(1,k+1)$, \eqref{eq:Lp-CM} when $q=k+1$, and the $L_p$-dual Minkowski problem when $k=n$.

We recall known results for the $L_p$-dual Christoffel--Minkowski problem.
For the general range $1\leq k\leq n$, Ding and Li \cite{DingLi2023} studied inverse curvature flows for \eqref{eq:closed-Lp-dual-CM}.
Chen--Tu--Xiang \cite{ChenTuXiang2025} treated the case $p\ge q$ with $p\ge1$. They proved existence and uniqueness of positive strictly spherical convex solutions for $p>q$, while for $p=q>1$ uniqueness holds up to dilation.
Cabezas-Moreno and Hu \cite{CabezasMorenoHu2025} proved the existence of even, smooth, strictly convex solutions for $1<p<q\le k+1$ via gradient estimates and degree theory.
Thus for $p>1$, known existence results cover $p\ge q$ and the subrange $1<p<q\le k+1$.
For the top-order case $k=n$, the equation becomes the smooth $L_p$-dual Minkowski problem, which has been extensively studied in \cite{HuangZhao2018, BoroczkyFodor2019, ChenHuangZhao2019, ChenLi2021, HuangLutwakYangZhang2016, BoroczkyLutwakYangZhangZhao2017, Zhao2017, LiLiuLu2019}. Uniqueness for $p>q$ and non-uniqueness for $p<0<q$ are established in \cite{HuangZhao2018} and \cite{LiLiuLu2019} respectively.

The classical Christoffel--Minkowski problem and its $L_p$ extension lead naturally to analogous questions for capillary hypersurfaces, as discussed in Section \ref{sec2.1}. The capillary Minkowski problem and its $L_p$ variants have developed rapidly in recent years. Mei, Wang, and Weng introduced and solved the capillary Minkowski problem in the smooth setting \cite{MeiWangWeng2025Minkowski}. They also proposed the capillary $L_p$-Minkowski problem for $p\ge1$. This problem is a natural Robin-boundary analogue of Lutwak's $L_p$-Minkowski problem \cite{MeiWangWeng2025LpMinkowski}. 
The capillary Christoffel--Minkowski problem was studied by Hu, Ivaki, and Scheuer \cite{HuIvakiScheuer2025}. They established an analogue of the Guan-Ma \cite{GuanMa2003} theorem in the half-space. Under a capillary spherical convexity condition on $\varphi^{-1/k}$, a prescribed $\sigma_k$-curvature is realized by a strictly convex capillary hypersurface. This work was also carried out independently around the same time by Mei, Wang, and Weng \cite{mwwcvpde}. 
The capillary $L_p$-Christoffel--Minkowski problem was then solved by Hu and Ivaki \cite{HuIvaki2025LpCM} in the range $1<p<k+1$ in the class of even hypersurfaces. Their proof combines non-collapsing estimates, curvature estimates and a capillary constant rank theorem. 
Closely related techniques appear in the capillary $L_p$-curvature problem \cite{HuIvaki2026LpCurvature}. For general $1\leq k < n$, the capillary $L_p$ dual Christoffel--Minkowski problem was considered by Li and Liu \cite{liliu} in the complementary range $p\geq q$.

Motivated by these developments, we consider the following capillary $L_p$-dual Christoffel--Minkowski problem. Given $1\le k\le n$, parameters $p,q\in\R$, a contact angle $\theta\in(0,\pi/2)$, and a positive smooth function $f$ on $\Ctheta$, find a positive strictly convex capillary support function $h$ satisfying
\begin{equation}\label{eq:capillary-Lp-dual-CM}
		\renewcommand{\arraystretch}{1.5}
		\left\{
		\begin{array}{rll}
			\sigma_k\big(\nabla^{2}h+hI\big)&=fh^{p-1}\big(h^2+|\nabla h|^2\big)^{\frac{k+1-q}{2}},& \text{in }\Ctheta,\\
		\nabla_{\mu}h&=\cot\theta h,& \text{on }\partial\Ctheta,
			\end{array}	
			\right.
	\end{equation}
where $\mathcal{C}_\theta$ is the capillary spherical cap, see Section \ref{sec 2}. This is a natural Robin boundary version of the classical $L_p$-dual Christoffel--Minkowski problem.
Here all derivatives and norms are taken with respect to the standard metric $g$ on $\Ctheta$. This equation simultaneously extends the closed $L_p$-dual Christoffel--Minkowski equation \eqref{eq:closed-Lp-dual-CM} and the capillary $L_p$-Christoffel--Minkowski equation. Indeed, when $q=k+1$, the gradient factor disappears and \eqref{eq:capillary-Lp-dual-CM} becomes the capillary $L_p$-Christoffel--Minkowski equation studied in \cite{HuIvaki2025LpCM}. When $k=n$, it becomes the capillary $L_p$-dual Minkowski equation, namely
\begin{equation}\label{eq:capillary-Lp-dual-Minkowski}
		\renewcommand{\arraystretch}{1.5}
		\left\{
		\begin{array}{rll}
			\det\big(\nabla^2h+hI\big)&=fh^{p-1}\big(h^2+|\nabla h|^2\big)^{\frac{n+1-q}{2}},& \text{in }\Ctheta,\\
		\nabla_{\mu}h&=\cot\theta h,& \text{on }\partial\Ctheta.
			\end{array}	
			\right.
	\end{equation}

Thus, the case $k=n$ is a capillary Monge-Amp\`{e}re equation with Robin boundary condition. Gao \cite{Gao2025CapillaryDual} studied \eqref{eq:capillary-Lp-dual-Minkowski} in the range $p>q$, $q\le1$, and proved existence and uniqueness for smooth strictly convex capillary solutions. Hu and Yang \cite{HuYang2026John} later used a capillary John ellipsoid theorem, a non-collapsing estimate, a gradient estimate and a refined $C^2$-estimate to obtain results in the three-dimensional half-space $\R^3_+$. They proved existence for $1<p\le q\le3$ under evenness assumptions in the appropriate cases and improved the $p>q$ result by removing the restriction $q\le1$ in that dimension. In a related direction, Hu, Hu, and Ivaki \cite{HuHuIvaki2025Flow} developed anisotropic capillary Gauss curvature flows and applied them to the capillary $L_p$-Minkowski problem, which corresponds to the special case $q=n+1$ of \eqref{eq:capillary-Lp-dual-Minkowski}.

The passage from the closed equation \eqref{eq:closed-Lp-dual-CM} to the capillary boundary value problem \eqref{eq:capillary-Lp-dual-CM} introduces several difficulties.
First, the dual factor $(h^2+|\nabla h|^2)^{(k+1-q)/2}$ couples the support function and its gradient. This coupling interacts with the boundary terms arising from $\nabla_{\mu}h=\cot\theta h$. For the $C^{2}$-estimate, the auxiliary function used in \cite{CabezasMorenoHu2025} does not apply directly. We must introduce a barrier function to ensure that the maximum of the auxiliary function cannot occur on the boundary, while preserving the maximum principle in the interior.
Second, the main a priori estimates for \eqref{eq:capillary-Lp-dual-CM} require a capillary version of the compact closed-sphere arguments. In particular, lower bounds for $h$ and non-collapsing estimates are delicate in the half-space.
Recent tools address these issues, including weighted capillary gradient estimates and the capillary John ellipsoid theorem \cite{HuIvaki2026LpCurvature,HuYang2026John}.
The geometric condition on the prescribed function must also reflect the boundary geometry. In the closed Christoffel--Minkowski theory, spherical convexity conditions such as
$$
\nabla^2\varphi^{-1/k}+g_{\Sn}\varphi^{-1/k}\ge0
$$
are natural and are closely related to the constant rank method. In the $L_p$-Christoffel--Minkowski problem, the corresponding exponent becomes $(p+k-1)^{-1}$. In the capillary setting, this interior convexity condition is supplemented by a boundary compatibility condition of Robin type, for example
$$
\nabla^2 f^{-\frac{1}{p+k-1}}
+g f^{-\frac{1}{p+k-1}}\ge0
\quad\text{in }\Ctheta,
\qquad
\nabla_{\mu}f^{-\frac{1}{p+k-1}}
\le\cot\theta f^{-\frac{1}{p+k-1}}
\quad\text{on }\partial\Ctheta .
$$
Such assumptions appear naturally in the capillary Christoffel--Minkowski and capillary $L_p$-Christoffel--Minkowski problems \cite{HuIvakiScheuer2025,HuIvaki2025LpCM,mwwcvpde}. 
\begin{theorem}\label{thm1}
Let $1\leq k <n$, $1<p<q\leq k+1$ and $\theta \in (0,\frac{\pi}{2})$. Assume $f \in C^{\infty}(\Ctheta)$ is a positive function satisfying 
$$f(-\xi_{1},\cdots,-\xi_{n},\xi_{n+1})=f(\xi_{1},\cdots,\xi_{n},\xi_{n+1})\quad \forall \xi \in \Ctheta.$$
$$
\nabla^{2}{f}^{-\frac{1}{p+k-1}}+gf^{-\frac{1}{p+k-1}}\geq 0
\quad\rm{in }\  \Ctheta,
$$
and the boundary condition 
$$\ \ \ \nabla_{\mu}f^{-\frac{1}{p+k-1}}\leq \cot\theta f^{-\frac{1}{p+k-1}}\quad\rm{on }\  \pa\Ctheta.$$
Then there exists a smooth, even, strictly convex, capillary hypersurface $\Sigma\subset\overline{\R^{n+1}_{+}}$ with contact angle $\theta$ whose capillary support function $h$ solves Eq. \eqref{eq:capillary-Lp-dual-CM}.
\end{theorem}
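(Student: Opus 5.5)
The proof is a continuity/degree argument built on a priori estimates for even solutions. Consider the family
$$
f_t=\big((1-t)+t f^{-\frac{1}{p+k-1}}\big)^{-(p+k-1)},\qquad t\in[0,1].
$$
Each $f_t$ is even. The interior spherical convexity condition and the Robin boundary inequality are linear in $f^{-1/(p+k-1)}$, and they hold for the constant function $1$ because $\nabla_\mu 1=0\le\cot\theta$. So both conditions persist along the path. For $t=0$ there is an explicit solution: the capillary support function $\ell=\sin^2\theta+\cos\theta\langle\xi,E_{n+1}\rangle$ of the spherical cap, suitably rescaled. Since $q\neq p$, the equation is not scale invariant, so we can choose a constant $c$ with $h=c\ell$ solving the equation for a constant right-hand side. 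We then replace $f_0$ by that constant.

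We work in the class of even capillary support functions, with the operator $F(h)=\sigma_k(\nabla^2h+hI)-f_th^{p-1}(h^2+|\nabla h|^2)^{\frac{k+1-q}{2}}$ subject to the Robin condition. Because $p<q$, uniqueness fails in general, so I would use a Leray--Schauder degree argument rather than the implicit function theorem. This requires:
\begin{itemize}
\item uniform $C^{2,\alpha}$ bounds for even, strictly convex solutions with $t\in[0,1]$;
\item a computation that the degree at $t=0$ is nonzero, via the linearized operator at $c\ell$, restricted to even functions, in the spirit of \cite{CabezasMorenoHu2025,HuIvaki2025LpCM};
\item preservation of strict convexity along the path.
\end{itemize}
Strict convexity comes from the capillary constant rank theorem of Hu--Ivaki. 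Since $\sigma_k$ is only elliptic in the $\Gamma_k$ cone, we need $W=\nabla^2h+hI>0$. The hypotheses on $f^{-1/(p+k-1)}$ are exactly what make the full rank of $W$ propagate along the deformation. Here the extra factor $(h^2+|\nabla h|^2)^{(k+1-q)/2}$ must be absorbed as in the closed case of Chen--Tu--Xiang and Cabezas-Moreno--Hu, with the boundary handled by the Robin compatibility condition.

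The a priori estimates proceed in order.
\begin{itemize}
\item \emph{$C^0$ upper bound.} Evaluate at the maximum point of $h$ (or of $h/\ell$, to respect the boundary condition). There $\nabla h$ is controlled and $\nabla^2 h\le0$, so $\sigma_k(hI)\gtrsim f h^{p-1}h^{k+1-q}$. This gives $h^{k}\ge c\, h^{p+k-q}$. Since $p<q$, this yields only a \emph{lower} bound on $\max h$.
\item \emph{Upper bound on $\max h$.} I would use an integral identity instead. Integrate the equation against $\ell$ (the capillary Minkowski-type formula). Combine this with evenness and the capillary John ellipsoid or non-collapsing estimate of \cite{HuYang2026John,HuIvaki2025LpCM}, which says that for even convex capillary bodies $h\ge c\max h$ away from degeneration. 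This gives two-sided bounds on $\max h$ and a positive lower bound on $h$. The lower bound is where evenness is essential; the estimate rests on $q\le k+1$ and $p>1$, so that the quermassintegral-type comparison $\int h^{p}\,d\mu$ versus $\int(h^2+|\nabla h|^2)^{(q-k-1)/2}$ closes.
\item \emph{Gradient bound.} This follows from convexity once $h$ is pinched, since $|\nabla h|^2+h^2\le(\max h)^2$ for support functions. The capillary version needs the Robin condition to rule out boundary maxima of $h^2+|\nabla h|^2$.
\end{itemize}

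The main obstacle is the $C^2$ estimate, as the introduction signals. I would take the test function of \cite{CabezasMorenoHu2025}, roughly $\log\sigma_1(W)$ or $\log W_{11}$ plus $\beta\log(h^2+|\nabla h|^2)$ minus $\gamma\log h$. To it I add a barrier term $A\,\zeta$, where $\zeta$ is a function with $\nabla_\mu\zeta<0$ on $\partial\Ctheta$, for instance built from $\langle\xi,E_{n+1}\rangle$ or $\ell$. The barrier serves to exclude a boundary maximum. On $\partial\Ctheta$, differentiating $\nabla_\mu h=\cot\theta h$ gives $\nabla_\mu W_{\alpha\alpha}$ in terms of $\cot\theta$ times $W$ and lower-order terms; the barrier term then dominates, forcing $\nabla_\mu(\text{test fn})<0$. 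At an interior maximum, I would use concavity of $\sigma_k^{1/k}$ and differentiate the equation twice. The terms from $(h^2+|\nabla h|^2)^{(k+1-q)/2}$ produce $\nabla^2 h$-quadratic terms, which the $\beta$ term absorbs; the barrier's second derivatives are controlled by $\sum F^{ii}$, using $\sum F^{ii}\ge c$. This yields $W\le C$. Evans--Krylov with the oblique (Robin) boundary condition of Lieberman--Trudinger then gives $C^{2,\alpha}$, and Schauder gives smoothness. The degree is therefore well defined and constant in $t$, so a solution exists at $t=1$. The hypersurface $\Sigma$ is recovered from $h$ as in Section \ref{sec 2}.
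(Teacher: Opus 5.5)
\paragraph{The $t=0$ endpoint of your homotopy is wrong.} Your path $f_t$ does preserve evenness and both hypotheses on $f^{-1/(p+k-1)}$. The problem is that $h=c\ell$ is not a solution at $t=0$. For $h=c\ell$ you have $\nabla^2h+hI=cI$, so the left-hand side is the constant $\binom{n}{k}c^k$. On the cap $\ell^2+|\nabla\ell|^2=|\xi|^2$, so the right-hand side is $f_0\,c^{p+k-q}\ell^{p-1}|\xi|^{k+1-q}$. Both $\ell$ (from $1-\cos\theta$ to $\sin^2\theta$) and $|\xi|$ (from $1-\cos\theta$ to $\sin\theta$) increase strictly from the pole to $\partial\mathcal{C}_\theta$. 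Since $p>1$ and $k+1-q\geq 0$, no constant $f_0$ works. (Constants solve the closed problem, but they violate $\nabla_\mu h=\cot\theta h$.)

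So you have no known solution at $t=0$ and cannot compute the degree there. Adjusting $f_0$ so that $\ell$ solves at fixed $q$ would not rescue it. Evaluating the degree needs all even solutions at $t=0$ and their nondegeneracy, and for $p<q$ that is exactly what is unavailable, as you note yourself.

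The missing idea is to deform the dual exponent too. The paper takes $q_t=k+1-t(k+1-q)$ and
$f_t=\big((1-t)(\ell^{p-1}/\binom{n}{k})^{\frac{1}{p+k-1}}+tf^{-\frac{1}{p+k-1}}\big)^{-(p+k-1)}$. At $t=0$ the gradient factor disappears and $f_0=\binom{n}{k}\ell^{1-p}$, so $h_0=\ell$ solves a capillary $L_p$-Christoffel--Minkowski problem. The paper quotes known results that $\ell$ is the unique even solution there with invertible linearization, which gives nonzero degree. One must then check two things about $\ell^{\frac{p-1}{p+k-1}}$. It satisfies the interior spherical convexity condition, by a direct computation. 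It satisfies the Robin inequality, since $\frac{p-1}{p+k-1}<1$. Together these keep the constant rank hypotheses along the path, and $p<q\leq q_t\leq k+1$ keeps the a priori estimates uniform in $t$.

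\paragraph{Two of your estimates do not close as written.} For $C^0$, the maximum principle at the minimum of $u=h/\ell$ gives $\min h\leq C(\max f)^{1/(q-p)}$, so no integral identity is needed. What is needed is $R/r\leq C$, and this is not a property of arbitrary even capillary convex bodies. The capillary John-ellipsoid argument requires an input from the equation: $|\nabla h|^2/h^{\gamma}\leq \hat C R^{2-\gamma}$ for some $0<\gamma<\min\{2(p-1)/n,2\}$. The paper proves this with the maximum principle for $\ell^{2-\gamma}|\nabla u|^2/u^{\gamma}$, whose logarithm has normal derivative $-\gamma\cot\theta<0$ on $\partial\mathcal{C}_\theta$.

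For $C^2$, at a boundary point $h_{\alpha\mu}=0$ and $h_\mu=\cot\theta h$ give $\nabla_\mu\rho^2=2\cot\theta\,h\,b_{\mu\mu}$. Hence $\nabla_\mu\hat f$ contains $(k+1-q)\cot\theta\, fh^{p}\rho^{k-1-q}b_{\mu\mu}$, and the normal derivative of the curvature part of your test function can be of order $b_{\mu\mu}$. A barrier $A\zeta$ with bounded normal derivative cannot dominate this. A positive multiple of $\log\rho^2$ makes it worse, adding $+2\beta\cot\theta\,h\,b_{\mu\mu}/\rho^2$. This is why the paper adds $-K\langle\nabla h,\nabla\zeta\rangle$ to $-L\zeta$: its normal derivative contributes $-Ke^{-\theta/2}b_{\mu\mu}$.
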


The present paper is organized as follows. In Section \ref{sec 2} we recall basic facts on capillary convex hypersurfaces, the capillary Gauss map, capillary support functions, and elementary symmetric functions. Section \ref{sec 3} is devoted to a priori estimates for solutions of \eqref{eq:capillary-Lp-dual-CM}, including gradient and non-collapsing estimates adapted to the Robin boundary condition. We also establish higher-order estimates and a constant rank theorem ensuring strict convexity. The proof of the main existence result is completed in Section \ref{sec 4} via a degree-theoretic argument.
Throughout the paper, given an orthonormal frame $\{e_i\}_{i=1}^n$ on $\Ctheta$, we write $h_{ij}$ for $\nabla^2 h(e_i,e_j)$ and $b_{ij,k}$ for $\nabla_{e_k}b_{ij}$, with analogous notation for higher derivatives. We use the Einstein summation convention: repeated indices are summed over regardless of position. When ambiguity may arise, summation is indicated explicitly.

\section{Preliminaries}\label{sec 2}
In this section, we collect some basic facts about capillary convex bodies and $k$-th elementary symmetric functions.
\subsection{Basics of capillary geometry}\label{sec2.1}
Let $\{E_i\}_{i=1}^{n+1}$ be the standard orthonormal basis of $\R^{n+1}$, $\R^{n+1}_{+}=\{x\in\R^{n+1}| x\cdot E_{n+1}>0\}$ be the upper Euclidean half-space. Let $\Sigma\subset\overline{\R^{n+1}_{+}}$ be a properly embedded, smooth compact hypersurface with boundary such that
	$$int(\Sigma)\subset\R^{n+1}_{+}\qquad \mathrm{and} \qquad\partial\Sigma\subset\partial\R^{n+1}_{+}.$$
	We call $\Sigma\subset\overline{\R^{n+1}_{+}}$ a {\emph{capillary hypersurface}} if $\Sigma$ intersects $\partial\R^{n+1}_{+}$ at a constant contact angle $\theta\in (0,\pi)$. Let $\nu$ be the unit outward normal (the Gauss map) of $\Sigma$ with respect to the domain $\widehat{\Sigma}$. Here $\widehat{\Sigma}$ is the bounded closed region in $\overline{\R^{n+1}_{+}}$ enclosed by $\Sigma$ and $\partial\R^{n+1}_{+}$. The contact angle $\theta$ is defined by 
	$$\cos(\pi-\theta) = \langle \nu , e\rangle,\ \mathrm{ on }\ \Sigma\cap \partial\R^{n+1}_{+},$$
	where $e:= -E_{n+1}$ is the unit outward normal of $\partial\R^{n+1}_{+}$.  Let $\mu$ be the unit outward co-normal of $\partial\Sigma$ in $\Sigma$ and let $\bar{\nu}$ be the unit normal of $\partial\Sigma\subset\overline{\mathbb{R}^{n+1}_{+}}$. Assume that \{$\nu,\mu$\} and \{$\bar{\nu},e$\} have the same orientation in the normal bundle of $\partial\Sigma\subset\overline{\mathbb{R}^{n+1}_{+}}$. Then the following relation holds:
\begin{equation}\label{normal transform}
\left\{\begin{aligned}
    \nu=&-\cos\theta\, e+\sin\theta\, \bar{\nu},\\
	\mu=&\sin\theta \,e+\cos\theta \,\bar{\nu}.
	\end{aligned}\right.
\end{equation}

	If $\Sigma$ is convex, then the Gauss image $\nu(\Sigma)$ of $\Sigma$ lies in the spherical cap
	$$\Sn_{\theta}:=\left\{x\in \Sn | \langle x, E_{n+1}\rangle \geq \cos\theta\right\}.$$
	Instead of the usual Gauss map $\nu$, it is more convenient to use the following map
	$$\tilde{\nu}:= T \circ \nu: \Sigma\to \Ctheta,$$
	where $\Ctheta$ is the spherical cap defined by
	$$\Ctheta:= \left\{\xi\in\overline{\R^{n+1}_{+}} |~ |\xi-\cos\theta\cdot e|=1\right\}.$$
Here, $T: \Sn_{\theta}\to \Ctheta$ denotes the vertical translation defined by $T(z)=z+\cos\theta\cdot e$. The diffeomorphism $\tilde{\nu}$ is called the {\emph{capillary Gauss map}} of $\Sigma$. Thus we can reparameterize $\Sigma$ using its inverse on $\Ctheta$ (see \cite{Xia-arxiv}*{Lemma 2.2}). 

Taking $X=\tilde{\nu}^{-1}$, the capillary support function of a strictly convex capillary hypersurface $\Sigma$, denoted by $h = h_{\Sigma} : \mathcal{C}_\theta \to \mathbb{R}$, is defined by
\begin{equation}
 \label{s2:def-capillary-spt}
h(\xi) := \langle X(\xi) , \nu\big(X(\xi)\big)\rangle = \langle\tilde{\nu}^{-1}(\xi)  ,\xi - \cos \theta\, e\rangle, \ \forall \, \xi \in \mathcal{C}^n_\theta.
\end{equation}
When $\Sigma=\mathcal{C}^n_{\theta}$, we have $X(\xi)=\xi$ for all $\xi\in \mathcal{C}^{n}_\theta$. It follows that
\begin{equation}
 \ell(\xi):=\sin^2\theta+\cos \theta \langle \xi, e \rangle.
\end{equation}
A direct calculation shows that $\nabla_{ij}^{2}\ell+\ell\delta_{ij}=\delta_{ij}$ and $1-\cos\theta\leq \ell\leq \sin^{2}\theta$.
Instead of the usual support function $h$, it is convenient to introduce the {\emph{capillary support}} 
$$u(\xi):=\frac{h(\xi)}{\ell(\xi)}.$$
The following properties hold for the support function $h$, the capillary support function $u$, and the spherical cap support function $\ell$. For the proof, we refer to \cite{MeiWangWeng2025Minkowski}.
\begin{lemma}\label{lem1}
Along the boundary $\pa\Ctheta$, we choose an orthonormal frame $\{e_{i}\}_{i=1}^{n}$ with $e_{n}=\mu$, where $\mu$ is the unit outer normal of $\pa\Ctheta\subset \Ctheta$. The following boundary conditions hold on $\partial \mathcal{C}_\theta$:
\begin{enumerate}
\item $\nabla_{\mu}h=\cot\theta h$,\\
\item $h_{\al n}=0,\quad\al=1,2,\cdots,n-1$,\\
\item$ \nabla_{\mu}u=0$,\ $\nabla_{\mu}\ell=\cot\theta \ell$,\\
\item $u_{\al n}=-\cot\theta u_{\al},\quad\al=1,2,\cdots,n-1$.
\end{enumerate}
\end{lemma}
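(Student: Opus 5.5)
We verify the four boundary identities on $\partial\Ctheta$ in turn, using only the definition of $h$, the relation \eqref{normal transform}, and the geometry of the cap $\Ctheta$. We work near a boundary point with the frame $\{e_\al\}_{\al<n}$ tangent to $\partial\Ctheta$ and $e_n=\mu$. On $\partial\Ctheta$ we have $\langle \xi,e\rangle=0$, and $\partial\Ctheta$ is a round $(n-1)$-sphere of radius $\sin\theta$ centred at the origin. The outward conormal of $\partial\Ctheta$ in $\Ctheta$ is therefore $\mu=\cos\theta\,\xi/\sin\theta\cdot\sin\theta+\sin\theta\, e$ in the sense of \eqref{normal transform} applied to $\Sigma=\Ctheta$, i.e. $\mu=\sin\theta\,e+\cos\theta\,\bar\nu$ with $\bar\nu=\xi/\sin\theta$.

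\emph{Items (1) and (3).} Write $\nu=\xi-\cos\theta e$, so that $h(\xi)=\langle X(\xi),\nu\rangle$. Differentiating in $\xi$ and using that $dX$ is tangent to $\Sigma$ (hence orthogonal to $\nu$), we get $\nabla h=\langle X, d\nu\rangle$. With $d\nu$ equal to the identity on $T\Ctheta$, this gives $\nabla_\mu h=\langle X,\mu\rangle$.

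At a boundary point $X\in\partial\Sigma\subset\partial\R^{n+1}_+$, so $\langle X,e\rangle=0$. Moreover, $\nu$ and $\mu$ both lie in $\mathrm{span}\{e,\bar\nu\}$, where $\bar\nu$ is the unit normal of $\partial\Sigma$ in the hyperplane. Because the capillary Gauss map carries $\partial\Sigma$ to $\partial\Ctheta$ with matching normals, $\bar\nu$ of $\Sigma$ coincides with $\xi/\sin\theta$. Hence
$$h=\langle X,\nu\rangle=\sin\theta\langle X,\bar\nu\rangle,\qquad \nabla_\mu h=\langle X,\mu\rangle=\cos\theta\langle X,\bar\nu\rangle,$$
which yields $\nabla_\mu h=\cot\theta\,h$.

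The same computation applied to $\Sigma=\Ctheta$ (where $h=\ell$) gives $\nabla_\mu\ell=\cot\theta\,\ell$. Since $u=h/\ell$, we then have $\nabla_\mu u=\ell^{-1}(\nabla_\mu h-u\nabla_\mu\ell)=0$.

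\emph{Item (2).} Differentiate the identity $h_n=\cot\theta\,h$ along a tangential direction $e_\al$ of $\partial\Ctheta$:
$$e_\al(h_n)=h_{\al n}+\langle\nabla_{e_\al}e_n,e_\beta\rangle h_\beta=\cot\theta\,h_\al .$$
The needed geometric input is the second fundamental form of $\partial\Ctheta$ in $\Ctheta$. Since $\partial\Ctheta$ is a latitude sphere of the unit cap, at angular distance $\pi-\theta$ from the cap's pole, one computes $\nabla_{e_\al}\mu=\cot\theta\,e_\al$. Substituting gives $h_{\al n}+\cot\theta\,h_\al=\cot\theta\,h_\al$, hence $h_{\al n}=0$.

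This sign and geometric computation is the only delicate point, and it is where I expect the main obstacle. To settle it, I would check it against $h=\ell$: there $\ell_{\al n}=-\ell\,\delta_{\al n}=0$ by $\nabla^2\ell+\ell I=I$, which confirms the value $\cot\theta$ for the second fundamental form.

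\emph{Item (4).} Differentiate $u_n=0$ tangentially in the same way: $0=u_{\al n}+\cot\theta\,u_\al$, i.e. $u_{\al n}=-\cot\theta\,u_\al$. Alternatively, item (4) follows by expanding $h=u\ell$ and combining (2) and (3).
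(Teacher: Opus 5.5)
Your route is the standard one and is correct in outline. The paper gives no proof of this lemma and refers to Mei--Wang--Weng, where the same computation appears. You get (1) and (3) from $\nabla_v h=\langle X,v\rangle$, $\nu=-\cos\theta\,e+\sin\theta\,\bar\nu$ and $\mu=\sin\theta\,e+\cos\theta\,\bar\nu$ with $\bar\nu=\xi/\sin\theta$. You get (2) and (4) by differentiating $h_n=\cot\theta\,h$ and $u_n=0$ along $\partial\Ctheta$. The one weak point is the step you flagged yourself, $\nabla_{e_\al}\mu=\cot\theta\,e_\al$: neither of your supporting remarks actually establishes it.

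First, the test with $h=\ell$ is vacuous. On $\partial\Ctheta$ both $\ell\equiv\sin^2\theta$ and $\ell_n=\sin\theta\cos\theta$ are constant, so $\ell_\al=0$ and $e_\al(\ell_n)=0$ there. The identity $e_\al(\ell_n)=\ell_{\al n}+\langle\nabla_{e_\al}\mu,e_\beta\rangle\ell_\beta$ then reads $0=0$, whatever the second fundamental form is. Second, $\Ctheta$ is the geodesic ball of radius $\theta$ (not $\pi-\theta$) about the top point $N=(0,\dots,0,1-\cos\theta)$, and $\mu$ points away from $N$. Read literally, your ``distance $\pi-\theta$'' gives $\cot(\pi-\theta)=-\cot\theta$, which is exactly the sign error you were worried about.

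The fix is one line from your own formula. Along $\partial\Ctheta$ one has $\mu=\sin\theta\,e+\cot\theta\,\xi$; your first displayed expression for $\mu$ has a stray factor $\sin\theta$, and the ``i.e.'' form is the correct one. For $e_\al$ tangent to $\partial\Ctheta$, the Euclidean derivative is $D_{e_\al}\mu=\cot\theta\,e_\al$. This is already tangent to $\Ctheta$, hence $\nabla_{e_\al}\mu=\cot\theta\,e_\al$.

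Alternatively, you can bypass the second fundamental form entirely. Since $X=\nabla h+h\,(\xi-\cos\theta\,e)$, we have $dX=\nabla^2h+hI$ on $T\Ctheta$. Then $h_{\al n}=(\nabla^2h+hI)(e_\al,\mu)=\langle dX(e_\al),\mu\rangle=0$, because $dX(e_\al)$ is tangent to $\partial\Sigma\subset\partial\R^{n+1}_+$ and is therefore orthogonal to both $e$ and $\bar\nu$. With either justification in place, (2) and (4) follow exactly as you wrote.
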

\subsection{$k$-th elementary symmetric function}
Let $Z=\{z_{ij}\}$ be an $n\times n$ symmetric matrix,
$$
\sigma_{k}(Z)=\sigma_{k}(\Lambda(Z))=\sum_{1\leq i_{1}<i_{2}\ldots <i_{k}\leq n}\Lambda_{i_{1}}\Lambda_{i_{2}}\ldots \Lambda_{i_{k}},
$$
where $\Lambda:=\Lambda(Z)=(\Lambda_{1},\ldots,\Lambda_{n})\in \mathbb{R}^{n}$ is the set of eigenvalues of $Z$.
\begin{definition}
Let $1\leq k\leq n$ and $\Gamma_{k}$ be a cone in $\mathbb{R}^{n}$ defined as
$$
\Gamma_{k}=\{\Lambda \in \mathbb{R}^{n}:\sigma_{i}(\Lambda)>0, \ \forall 1\leq i \leq k\}.
$$
\end{definition}

\begin{definition}
A function $h\in C^{2}(\Ctheta)$ of \eqref{eq:capillary-Lp-dual-CM} is called a $k$-admissible solution if
$$
b=\nabla^{2}h(\xi)+h(\xi)I\in \Gamma_{k}
$$
for all points $\xi\in \Ctheta$. If $b\in \Gamma_{n}$, then $h$ is strictly (spherical) convex.
\end{definition}
We denote by $\sigma_{k}(\Lambda|i)$ the symmetric function with $\Lambda_{i}=0$. We list below some basic properties of the $k$-th elementary symmetric functions that will be used throughout the paper.
\begin{proposition}\label{basic}
Let $\Lambda=(\Lambda_{1},\cdots, \Lambda_{n})\in \mathbb{R}^{n}$ and $k=0,1,\cdots, n$. Then,
\begin{enumerate}
\item[(i)] $\sigma_{k+1}(\Lambda)=\sigma_{k+1}(\Lambda|i)+\lambda_{i}\sigma_{k}(\Lambda|i), \quad \forall 1\leq i\leq n$.
\item[(ii)] $\sum_{i=1}^{n}\Lambda_{i}\sigma_{k}(\Lambda|i)=(k+1)\sigma_{k+1}(\Lambda)$.

\item[(iii)]$\sum_{i=1}^{n}\sigma_{k}(\Lambda|i)=(n-k)\sigma_{k}(\Lambda)$.

\item[(iv)] $\sum^{n}_{i=1}\Lambda^{2}_{i}\sigma_{k-1}(\Lambda|i)=\sigma_{1}(\Lambda)\sigma_{k}(\Lambda)-(k+1)\sigma_{k+1}(\Lambda)$.

\end{enumerate}
\end{proposition}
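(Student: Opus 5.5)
The statement is a list of purely combinatorial identities for the polynomials $\sigma_k$. I would prove them directly from the definition
$$\sigma_k(\Lambda)=\sum_{1\leq i_1<\cdots<i_k\leq n}\Lambda_{i_1}\cdots\Lambda_{i_k}$$
by counting monomials, with the conventions $\sigma_0\equiv 1$ and $\sigma_j\equiv 0$ for $j>n$ or $j<0$. I also read the $\lambda_i$ in (i) as $\Lambda_i$. Items (i)–(iii) are immediate bookkeeping, and (iv) follows from (i) and (ii), so I expect no real obstacle beyond keeping the indices straight.

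For (i), fix $i$ and split the monomials of $\sigma_{k+1}(\Lambda)$ according to whether the index set $\{i_1,\dots,i_{k+1}\}$ contains $i$.
The monomials not containing $i$ sum to $\sigma_{k+1}(\Lambda|i)$.
Those containing $i$ are exactly $\Lambda_i$ times a monomial of degree $k$ in the remaining variables, so they sum to $\Lambda_i\sigma_k(\Lambda|i)$.

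For (ii), expand $\sum_i\Lambda_i\sigma_k(\Lambda|i)$. Each product $\Lambda_i\Lambda_{j_1}\cdots\Lambda_{j_k}$ with $i\notin\{j_1,\dots,j_k\}$ is a squarefree monomial of degree $k+1$. A given such monomial with index set $S$, $|S|=k+1$, arises exactly $k+1$ times, once for each choice of $i\in S$. This gives $(k+1)\sigma_{k+1}(\Lambda)$.

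For (iii), a fixed monomial of $\sigma_k$ with index set $S$, $|S|=k$, appears in $\sigma_k(\Lambda|i)$ precisely when $i\notin S$. That happens for $n-k$ values of $i$, so the sum equals $(n-k)\sigma_k(\Lambda)$.

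For (iv), apply (i) with $k$ replaced by $k-1$ to get
$$\Lambda_i\sigma_{k-1}(\Lambda|i)=\sigma_k(\Lambda)-\sigma_k(\Lambda|i).$$
Multiply by $\Lambda_i$ and sum over $i$:
$$\sum_i\Lambda_i^2\sigma_{k-1}(\Lambda|i)=\sigma_1(\Lambda)\sigma_k(\Lambda)-\sum_i\Lambda_i\sigma_k(\Lambda|i).$$
By (ii), the last sum equals $(k+1)\sigma_{k+1}(\Lambda)$, which gives the claim. The edge cases $k=0$ and $k=n$ are covered by the stated conventions; for example, when $k=n$ one has $\sigma_n(\Lambda|i)=0$ and $\sigma_{n+1}=0$, consistent with (ii) and (iii).
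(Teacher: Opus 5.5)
Your proof is correct. The paper states Proposition~\ref{basic} without proof, as standard facts about elementary symmetric functions, and your argument is the standard verification: counting index sets for (i)--(iii), then deriving (iv) from (i) with $k$ replaced by $k-1$ together with (ii), under the conventions $\sigma_0\equiv 1$ and $\sigma_j\equiv 0$ for $j<0$ or $j>n$ (and reading $\lambda_i$ in (i) as $\Lambda_i$).
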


\begin{proposition}[Newton-Maclaurin inequality]\label{NM} For $\Lambda\in \Gamma_{k}$ and $1\leq l \leq k\leq n$, we have
$$
\left[\frac{\sigma_{k}(\Lambda)}{\binom{n}{k}}\right]^{\frac{1}{k}}\leq \left[\frac{\sigma_{l}(\Lambda)}{\binom{n}{l}}\right]^{\frac{1}{l}}.$$
\end{proposition}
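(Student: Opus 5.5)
The plan is to derive the Newton--Maclaurin inequality from the classical Newton inequalities for the normalized functions
$$
S_j(\Lambda):=\frac{\sigma_j(\Lambda)}{\binom{n}{j}},\qquad S_0:=1 .
$$
The Newton inequalities
$$
S_{j-1}S_{j+1}\leq S_j^2,\qquad 1\leq j\leq n-1,
$$
hold for \emph{every} $\Lambda\in\R^n$, with no cone condition. The cone $\Gamma_k$ is then used only to take logarithms, which turns the Newton inequalities into a concavity statement; the Maclaurin chain follows by a discrete argument.

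\textbf{Step 1 (Newton inequalities).} Consider
$$
P(t)=\prod_{i=1}^n(t+\Lambda_i)=\sum_{j=0}^n\binom{n}{j}S_j\,t^{n-j},
$$
which has only real roots. By Rolle's theorem, every derivative of a real-rooted polynomial is again real-rooted. The same holds for the reversal $t^{m}Q(1/t)$ of a real-rooted polynomial $Q$ of degree $m$, provided we track the roots at $0$ and the degree drop (or argue by perturbation or continuity in $\Lambda$).

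Fix $j$ with $1\leq j\leq n-1$. Differentiate $P$ exactly $n-j-1$ times to get a polynomial of degree $j+1$ that retains $S_0,\dots,S_{j+1}$. Reverse it, so that $S_{j+1}$ becomes the leading coefficient, and differentiate $j-1$ more times. The result is a quadratic proportional to
$$
S_{j+1}t^2+2S_jt+S_{j-1},
$$
where the binomial normalizations are exactly what make the coefficients come out this way. This quadratic is real-rooted, or degenerate, which is handled by continuity. Hence its discriminant is nonnegative, which is $S_j^2\geq S_{j-1}S_{j+1}$.

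The only real bookkeeping in the whole proof is checking that the factorials combine to produce precisely these normalized coefficients, and handling degenerate cases (such as $S_{j+1}=0$) by approximating $\Lambda$ by vectors with distinct nonzero entries.

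\textbf{Step 2 (Maclaurin chain on $\Gamma_k$).} For $\Lambda\in\Gamma_k$ we have $S_0,S_1,\dots,S_k>0$. Set $a_j=\log S_j$ for $0\leq j\leq k$, so $a_0=0$. Step 1 with $1\leq j\leq k-1$ gives
$$
a_{j-1}+a_{j+1}\leq 2a_j,
$$
so $j\mapsto a_j$ is concave on $\{0,\dots,k\}$. For a concave sequence with $a_0=0$, the slopes $a_j/j=(a_j-a_0)/(j-0)$ are nonincreasing in $j$.

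To make this explicit, I would show $S_{l+1}^{l}\leq S_l^{l+1}$ for $1\leq l\leq k-1$. Raise the Newton inequality at index $j$ to the power $j$ and multiply over $j=1,\dots,l$:
$$
\prod_{j=1}^{l}\bigl(S_{j-1}S_{j+1}\bigr)^{j}\leq \prod_{j=1}^{l}S_j^{2j}.
$$
All factors are positive, so the common powers of $S_1,\dots,S_{l-1}$ cancel; since $S_0=1$, what remains is exactly $S_{l+1}^{l}\leq S_l^{l+1}$. Thus $S_{l+1}^{1/(l+1)}\leq S_l^{1/l}$, and chaining these inequalities from $l$ up to $k$ gives $S_k^{1/k}\leq S_l^{1/l}$ for all $1\leq l\leq k$, which is the stated inequality.
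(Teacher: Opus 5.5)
The paper does not prove Proposition~\ref{NM}; it quotes it as the classical Newton--Maclaurin inequality. Your argument is the standard proof and is correct.

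The bookkeeping checks out. One has $P^{(n-j-1)}(t)=\frac{n!}{(j+1)!}\sum_{i=0}^{j+1}\binom{j+1}{i}S_i\,t^{j+1-i}$, and after reversal and $j-1$ further derivatives one gets exactly $\frac{n!}{2}\bigl(S_{j+1}t^2+2S_jt+S_{j-1}\bigr)$. In the telescoping product, each $S_m$ with $1\le m\le l-1$ appears to the power $2m$ on both sides, leaving $S_{l+1}^{l}\le S_l^{l+1}$.

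One minor point concerns your proposed perturbation to vectors with distinct nonzero entries. It does not actually remove the degeneracy: for instance $\Lambda=(1,2,-\tfrac{2}{3})$ has distinct nonzero entries but $\sigma_2(\Lambda)=0$. It also does not prevent $P^{(n-j-1)}$ from vanishing at $0$. However, no perturbation is needed. Since $P^{(n-j-1)}=c\prod_{i}(t-r_i)$ has exact degree $j+1$ with real $r_i$, its reversal is $c\prod_i(1-r_it)$, whose roots $1/r_i$ (for $r_i\neq 0$) are all real. The degree drops only when $S_{j+1}=0$, and in that case $S_{j-1}S_{j+1}\le S_j^2$ holds trivially.
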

The following proposition is well known, see e.g., \cite{Gcpam}*{Lemma 2.1}.
\begin{proposition}\label{g}
Denote by $\rm{Sym(n)}$ the set of all $n\times n$ symmetric matrices. Let $F$ be a $C^{2}$ symmetric function defined in some open subset $\Psi\subset \rm{Sym(n)}$. For a diagonal matrix $Z \in \Psi$ with distinct eigenvalues, write $F(Z)=f(\kappa(Z))$, where $\kappa(Z)=(\kappa_{1},\cdots,\kappa_{n})$ are the eigenvalues of $Z$. Let $\ddot{F}(Y,Y)$ be the second derivative of a $C^{2}$ symmetric $F$ in direction $Y \in\rm{Sym(n)}$; then
$$\ddot{F}(Y,Y)=\sum_{j,k=1}^{n}\ddot{f}^{jk}Y_{jj}Y_{kk}+2\sum_{j<k}\frac{\dot{f}^{j}-\dot{f}^{k}}{\lambda_{j}-\lambda_{k}}Y^{2}_{jk},$$
where the first derivative of $F$ satisfies $\dot{F}^{ij}(Z)=\dot{f}^{i}\delta_{ij}$.
\end{proposition}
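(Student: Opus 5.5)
The plan is to compute $\ddot F(Y,Y)=\frac{d^{2}}{dt^{2}}\big|_{t=0}F(Z+tY)$ directly, by expanding the eigenvalues of the perturbed matrix $Z+tY$ to second order in $t$ and then applying the chain rule to $f$. Throughout, $\lambda_j$ and $\kappa_j$ denote the same thing, namely the diagonal entries of $Z$.

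\emph{Step 1: the eigenvalues are smooth in $t$.} Write $Z=\mathrm{diag}(\lambda_1,\dots,\lambda_n)$ with the $\lambda_j$ pairwise distinct. Each $\lambda_j$ is then a simple root of $P(t,\lambda)=\det(Z+tY-\lambda I)$, so $\partial_\lambda P(0,\lambda_j)\neq 0$. By the implicit function theorem there are smooth functions $\lambda_j(t)$ near $t=0$ with $\lambda_j(0)=\lambda_j$. Using the same nondegeneracy, we can also choose smooth unit eigenvectors $v_j(t)$ with $v_j(0)=E_j$. Since $F$ is symmetric, $F(Z+tY)=f(\lambda_1(t),\dots,\lambda_n(t))$ for small $t$.

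\emph{Step 2: first and second variation of the eigenvalues.} Differentiate $(Z+tY)v_j=\lambda_j v_j$ and pair with $v_j$, using $|v_j|=1$, to get $\lambda_j'=\langle Yv_j,v_j\rangle$. At $t=0$ this gives $\lambda_j'(0)=Y_{jj}$.

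Differentiate again to get $\lambda_j''=2\langle Yv_j',v_j\rangle$, using symmetry and $\langle v_j,v_j'\rangle=0$. Next compute $v_j'(0)$ by pairing the first-differentiated equation with $E_k$ for $k\neq j$: this gives $(\lambda_k-\lambda_j)\langle v_j'(0),E_k\rangle=-Y_{kj}$. Since also $\langle v_j'(0),E_j\rangle=0$, we conclude
$$\lambda_j''(0)=2\sum_{k\neq j}\frac{Y_{jk}^{2}}{\lambda_j-\lambda_k}.$$

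\emph{Step 3: chain rule and symmetrization.}
$$\frac{d^{2}}{dt^{2}}\Big|_{t=0}f(\lambda(t))=\sum_{j,k}\ddot f^{jk}Y_{jj}Y_{kk}+2\sum_{j}\dot f^{j}\sum_{k\neq j}\frac{Y_{jk}^{2}}{\lambda_j-\lambda_k}.$$
In the double sum, group each unordered pair $\{j,k\}$ with $j<k$. It contributes $\dot f^{j}Y_{jk}^2/(\lambda_j-\lambda_k)+\dot f^{k}Y_{jk}^2/(\lambda_k-\lambda_j)=\frac{\dot f^{j}-\dot f^{k}}{\lambda_j-\lambda_k}Y_{jk}^{2}$, which yields the stated formula.

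The same first-order computation, $\frac{d}{dt}F=\sum_j\dot f^{j}Y_{jj}$ for every symmetric $Y$, identifies $\dot F^{ij}(Z)=\dot f^{i}\delta_{ij}$.

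The only delicate point is Step 1, the differentiability of eigenvalues and eigenvectors. It relies essentially on the eigenvalues being distinct; otherwise the divided differences in the formula are not even defined.
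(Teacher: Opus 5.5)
Your proof is correct. The paper does not actually prove this proposition; it quotes it as well known from \cite{Gcpam}*{Lemma 2.1}, so there is no in-paper argument to compare against. What you give is the standard perturbation computation for simple eigenvalues, and it is a complete, self-contained proof. Here $\lambda_j=\kappa_j$, and $f(\mu)=F(\operatorname{diag}\mu)$ is $C^2$ because it is the restriction of $F$ to the diagonal matrices, so your chain rule in Step 3 is justified.

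There are two small precision points.
\begin{enumerate}
\item In Step 2, the identity $\lambda_j''=2\langle Yv_j',v_j\rangle$ uses only the symmetry of $Y$. The normalization $\langle v_j,v_j'\rangle=0$ is needed one line later: an $E_j$-component of $v_j'(0)$ would add $2\langle v_j'(0),E_j\rangle Y_{jj}$ to $\lambda_j''(0)$.
\item In Step 1, the smooth eigenvectors can be made explicit. Write $v=E_j+w$ with $w\perp E_j$ and solve the rows $k\neq j$ of $(Z+tY-\lambda_j(t)I)v=0$. At $t=0$ their coefficient block is $\operatorname{diag}(\lambda_k-\lambda_j)_{k\neq j}$, hence invertible. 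Then normalize.
\end{enumerate}

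For comparison, the other classical route avoids differentiating eigenvectors. Split $Y=D+B$ into its diagonal and off-diagonal parts.
\begin{itemize}
\item The diagonal part gives $\sum\ddot f^{jk}Y_{jj}Y_{kk}$ directly, since $F=f$ on diagonal matrices.
\item The cross term $\ddot F(D,B)$ vanishes. Indeed, $\ddot F(D,B)=\ddot F(D,RBR)$ for every diagonal sign matrix $R$ (such $R$ fixes $Z$ and $D$), and $RBR$ averages to $0$ over all such $R$.
\item For $\ddot F(B,B)$, differentiate $F(e^{tS}Ze^{-tS})=F(Z)$ twice. Here $S$ is skew with $S_{kl}=B_{kl}/(\lambda_l-\lambda_k)$, so that $[S,Z]=B$. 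This gives $\ddot F(B,B)=-\dot F([S,B])=2\sum_k\dot f^k\sum_{l\neq k}B_{kl}^2/(\lambda_k-\lambda_l)$.
\end{itemize}
That argument uses only the $C^2$ regularity and orthogonal invariance of $F$. Distinctness of the eigenvalues enters only through the solvability of $[S,Z]=B$. Your route needs the implicit-function step, but in exchange it also yields the second variation $\lambda_j''(0)=2\sum_{k\neq j}Y_{jk}^2/(\lambda_j-\lambda_k)$ of each individual eigenvalue, which is useful on its own.
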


\section{A priori estimates}\label{sec 3}
For the capillary $L_p$ dual Christoffel--Minkowski problem, the key step is to obtain a priori estimates, particularly the lower bound of the support function and the $C^2$ estimate. As in the classical closed case, when $p<q$ we first obtain a non-collapsing estimate via gradient estimates combined with the capillary John ellipsoid theorem, and then derive a uniform $C^0$ estimate by the maximum principle. Let $R:=\underset{\Ctheta}{\max}\ h$, $r:=\underset{\Ctheta}{\min}\ h$ and $\rho^{2}:=h^{2}+|\nabla h|^{2}$.

\begin{lemma}\label{lem2}
Let $\theta \in (0,\pi/2)$, $1\leq k\leq n$, $1<p\leq k+1$ and $q\leq k+1$. Suppose $h$ is a positive, smooth and strictly convex solution to Eq. \eqref{eq:capillary-Lp-dual-CM}. For any $0<\ga<\min\{2(p-1)/n,2\}$, there exists a positive constant $\hat{C}$ depending on $\ga,n,p,q,\theta, ||f||_{C^{1}(\Ctheta)}$ such that 
\begin{equation}\label{gradient estimate}
\frac{|\nabla h|^{2}}{h^{\ga}}\leq \hat{C}R^{2-\ga}.\ 
\end{equation}
\end{lemma}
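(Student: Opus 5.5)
We prove \eqref{gradient estimate} by the maximum principle applied to the auxiliary function
$$
\Phi:=\frac{|\nabla h|^{2}}{h^{\ga}},
$$
following the closed-case argument of Cabezas-Moreno and Hu \cite{CabezasMorenoHu2025}. The boundary is handled through the Robin condition $\nabla_\mu h=\cot\theta\, h$ and Lemma \ref{lem1}. Let $\xi_0$ be a maximum point of $\Phi$ on $\Ctheta$. The proof splits into a boundary case and an interior case.

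\textbf{Boundary case.} Suppose $\xi_0\in\pa\Ctheta$. Take the frame of Lemma \ref{lem1} with $e_n=\mu$. Then $h_{\al n}=0$, $h_n=\cot\theta\, h$, and $\nabla_\mu\ell=\cot\theta\,\ell$. Since $\pa\Ctheta$ is a geodesic sphere of the cap, $\nabla_\mu h_\al=\nabla_{e_\al}(h_n)$ up to a curvature term of the boundary. Using $h_n=\cot\theta\, h$ on the boundary, we compute
$$
\nabla_\mu|\nabla h|^2=2h_\al h_{\al n}+2h_nh_{nn}=2\cot\theta\, h\, h_{nn}.
$$
It is cleaner to work with the quantity $h_\al h_\al+\cot^2\theta\, h^2$ and with $\nabla_\mu$ applied to $u=h/\ell$, whose normal derivative vanishes.

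Concretely, we rewrite $|\nabla h|^2$ using $\nabla h=\ell\nabla u+u\nabla\ell$. Then $\Phi$ is a function of $u$, $\nabla u$ and $\ell$, and Lemma \ref{lem1}(3)--(4) gives
$$
\nabla_\mu |\nabla u|^2=-2\cot\theta\,|\nabla^T u|^2\le 0 .
$$
If this sign control makes the outward normal derivative of the modified function strictly negative, the Hopf-type inequality $\nabla_\mu\Phi\ge0$ at a boundary maximum is contradicted. If it does not, we trivially bound $\Phi$ by $C(\theta)R^{2-\ga}$ wherever $|\nabla h|\le C h$ at $\xi_0$. I therefore expect to run the argument with $\tilde\Phi=|\nabla u|^2/u^{\ga}$, or a mixed version, and to convert back using $1-\cos\theta\le\ell\le\sin^2\theta$ and $|\nabla\ell|\le C$. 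The cost is only constants depending on $\theta$.

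\textbf{Interior case.} At an interior maximum we have $\nabla\Phi=0$, which gives $h_{ij}h_j=\frac{\ga}{2}\,\frac{|\nabla h|^2}{h}\,h_i$. We also have $\sigma_k^{ij}\Phi_{ij}\le0$. Rotate so that $\nabla h=|\nabla h|e_1$ and $b$ is diagonal. Then $b_{11}=h+\frac{\ga}{2}\frac{|\nabla h|^2}{h}$, and $b_{1j}=0$ for $j\neq1$.

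Differentiate the equation in the form $\log\sigma_k(b)=\log f+(p-1)\log h+\frac{k+1-q}{2}\log\rho^2$, and use the commutator $h_{ijl}=h_{lij}+h_l\delta_{ij}-h_j\delta_{il}$ to evaluate $\sigma_k^{ij}h_{lij}h_l$. The main positive contribution comes from $(p-1)|\nabla h|^2/h$ multiplied by $k\sigma_k$. Against it stand the term $\ga\,\sigma_k^{ii}h_i^2/h$ and the terms from $\sigma_k^{ij}h_{li}h_{lj}$, the latter handled by Proposition \ref{basic}(ii),(iv). The coefficient condition $\ga<2(p-1)/n$ is exactly what makes the combination $(p-1)-\frac{n\ga}{2}$ positive after the trace bound $\sum\sigma_k^{ii}=(n-k+1)\sigma_{k-1}$ and Newton--Maclaurin (Proposition \ref{NM}).

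The $\rho$-term contributes $\frac{k+1-q}{\rho^2}\big(hh_l+h_mh_{ml}\big)h_l$, which can be rewritten with the critical point relation. Because $q\le k+1$ it has a favorable sign, and since $\ga<2$ the factor $1+\frac{\ga}{2}\frac{|\nabla h|^2}{h^2}$ is positive. The $\nabla f$ term is bounded by $\|f\|_{C^1}|\nabla h|$.

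Combining these, we get at $\xi_0$ an inequality of the form
$$
c\,\frac{|\nabla h|^2}{h}\le C\big(|\nabla h|+h\big).
$$
This yields $|\nabla h|\le Ch\le CR$ at $\xi_0$, hence $\Phi(\xi_0)\le C h^{2-\ga}\le CR^{2-\ga}$, using $2-\ga>0$.

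\textbf{Main obstacle.} I expect the interior algebra to carry over from \cite{CabezasMorenoHu2025} essentially verbatim. The delicate step is the boundary case: choosing the right modified auxiliary function, for instance one built from $u$ and $\ell$, so that the Robin condition produces a strict sign for its normal derivative without destroying the interior computation.
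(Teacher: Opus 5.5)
There is a genuine gap: you never produce a single auxiliary function that works both at a boundary maximum and at an interior maximum. For $\Phi=|\nabla h|^2/h^{\gamma}$ the interior computation is the local closed-case one. On $\partial\Ctheta$, however, Lemma \ref{lem1} gives $\nabla_\mu\log\Phi=\cot\theta\,(2h\,h_{nn}/|\nabla h|^2-\gamma)$, which has no sign because $h_{nn}$ may be large. Your fallback ``$|\nabla h|\le Ch$ at $\xi_0$'' is unjustified, since only the normal component $h_n=\cot\theta\,h$ is controlled there.

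Switching to $|\nabla u|^2/u^{\gamma}$ fixes the boundary, since $\nabla_\mu\log(|\nabla u|^2/u^{\gamma})=-2\cot\theta$. But then the interior algebra does \emph{not} carry over verbatim. Because $b_{ij}=\ell u_{ij}+u_i\ell_j+u_j\ell_i+u\delta_{ij}$, the differentiated equation produces the term $-\frac{|\nabla u|^2}{\ell}\sum_i\sigma_k^{ii}$ in $\sigma_k^{ij}u_mu_{ijm}$. This only partly cancels the commutator term $+|\nabla u|^2\sum_i\sigma_k^{ii}$. Adding $+\frac{\gamma}{\ell}\sum_i\sigma_k^{ii}$ from $-\gamma\sigma_k^{ij}u_{ij}/u$, the coefficient of $\sum_i\sigma_k^{ii}$ becomes $(\gamma-2(1-\ell))/\ell$. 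Since $1-\ell\ge\cos^2\theta$, this is negative whenever $\gamma<2\cos^2\theta$, which is forced, e.g., when $p$ is close to $1$. For $k\ge2$ the only large positive term, $a_\gamma\sigma_k^{ij}u_iu_j/u^2$, sees only the $\nabla u$ direction, so the argument does not close.

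The missing idea is the weight $\ell^{\beta}$ with $\beta=2-\gamma$, i.e.\ the paper's $\Psi=\ell^2|\nabla u|^2/h^{\gamma}=\ell^{2-\gamma}|\nabla u|^2/u^{\gamma}$. On $\partial\Ctheta$, using $u_n=0$, $u_{\alpha n}=-\cot\theta\,u_\alpha$ and $\nabla_\mu\ell=\cot\theta\,\ell$, one gets $\nabla_\mu\log\Psi=(\beta-2)\cot\theta=-\gamma\cot\theta<0$, so a boundary maximum is impossible. In the interior, $\nabla^2\log\ell^{\beta}$ contributes $\beta\frac{1-\ell}{\ell}\sum_i\sigma_k^{ii}$ via $\ell_{ij}=(1-\ell)\delta_{ij}$. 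The net coefficient is then $(\beta-2)\frac{1-\ell}{\ell}+\frac{\gamma}{\ell}=\gamma$, as in the closed case. Moreover, the $\nabla\ell$-terms combine with the Cauchy--Schwarz part of $\sigma_k^{ij}u_{mi}u_{mj}$ into $a_\gamma\,\sigma_k^{ij}h_ih_j/h^2$, with $a_\gamma=\gamma-\gamma^2/2$. This holds up to $O(\hat C^{-1/2})$ errors under the contradiction hypothesis $|\nabla u|^2/u^2>\hat C$.

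Two smaller inaccuracies remain. First, the restriction on $\gamma$ enters through $2(p-1)-k\gamma>0$, coming from $\sigma_k^{ij}b_{ij}=k\sigma_k$ and the $(p-1)\log h$ term, not through a trace bound with $n$; so $\gamma<2(p-1)/n$ is merely sufficient. Second, in the $u$-formulation the $\rho$-term is not sign-definite by itself. It becomes positive only after inserting the critical point relation and taking $\hat C$ large.
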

\begin{proof}
We define the auxiliary function $$\Psi:=\frac{\ell^{2}|\nabla u|^{2}}{h^{\ga}}=\frac{\ell^{\be}|\nabla u|^{2}}{u^{\ga}},$$
where $\be=2-\ga>0$.
To prove \eqref{gradient estimate}, it suffices to show that
\begin{equation}\label{1}
\Psi \leq \hat{C} \left( \max_{\Ctheta} \ell \right)^{\beta} \left( \max_{\Ctheta} u \right)^{\beta},
\end{equation}
since $h = u \ell$ and $|\nabla \ell| \leq C(\theta)$.
Suppose \eqref{1} is not true and $\Psi$ attains its maximum at $\xi_{0} \in \Ctheta$, then at $\xi_{0}$, we have 
\begin{equation}\label{assume}
\Psi(\xi_{0})>\hat{C} \left( \max_{\Ctheta} \ell \right)^{\beta} \left( \max_{\Ctheta} u \right)^{\beta} \Leftrightarrow\ \frac{|\nabla u|^{2}}{u^{2}}(\xi_{0})>\hat{C}.
\end{equation}

{\bf Case I:} $\xi_0\in \partial \mathcal{C}_{\theta}$. Let $\{e_i\}^{n}_{i=1}$ be an orthonormal frame around $\xi_0$ such that $e_n=\mu$. The maximum condition and Lemma \ref{lem1} imply 
\begin{equation}
   0\leq \nabla_{\mu}\log \left(\frac{\ell^{\beta}|\nabla u|^2}{u^{\gamma}} \right) =\frac{2u_\al u_{\al n}}{|\nabla u|^2}+\beta \frac{\nabla_{\mu}\ell}{\ell}-\gamma\frac{\nabla_{\mu}u}{u} =-\gamma \cot \theta <0 .\notag
\end{equation}
This yields a contradiction. Hence $\xi_{0}\in \Ctheta\setminus\pa\Ctheta$.

{\bf Case II:} $\xi_{0}\in \Ctheta\setminus\pa\Ctheta$. The maximum condition at $\xi_{0}$ implies
\begin{equation}\label{2}
0=\nabla_i \log \Phi\Leftrightarrow\  \frac{2u_{m}u_{mi}}{|\nabla u|^{2}}=\ga \frac{u_{i}}{u}-\be\frac{\ell_{i}}{\ell},
\end{equation}
and
\begin{align*}
0\geq \nabla^2_{ij}(\log \Phi)&=\frac{2\sum_{m}(u_{m}u_{mij}+u_{mi}u_{mj})}{|\nabla u|^2} -(\beta+\beta^2)\frac{\ell_i \ell_j}{\ell^2}+\beta\gamma\frac{u_i \ell_j+u_j\ell_i}{u\ell}\\
&+(\gamma-\gamma^2)\frac{u_i u_j}{u^2}+ \beta \frac{1-\ell}{\ell}\delta_{ij}-\gamma \frac{u_{ij}}{u},
\end{align*}
where we used $\ell_{ij}+\ell \delta_{ij}=\delta_{ij}$. Recall that
$$\sigma_{k}^{ij}b_{ij}=\sigma_{k}^{ij}(\ell u_{ij}+u_{i}\ell_{j}+u_{j}\ell_{i}+u\delta_{ij})=kfh^{p-1}\rho^{k+1-q}$$
and hence
\begin{equation}\label{9}
\sigma_{k}^{ij}u_{ij}=\frac{1}{\ell}\Big(fh^{p-1}\rho^{k+1-q}-2\sigma_{k}^{ij}u_{i}\ell_{j}-u\sigma_{k}^{ij}\delta_{ij}\Big).
\end{equation}
Then we have that
\begin{align}
0&\geq \frac{2}{|\nabla u|^2} \left(\sigma^{ij}_{k}u_mu_{mij}+\sigma^{ij}_{k}u_{mi}u_{mj}\right)-(\beta +\beta^2)\sigma^{ij}_{k}\frac{\ell_i\ell_j}{\ell^2}\notag\\
&\quad+(2\beta \gamma +2 \gamma)\sigma^{ij}_{k}\frac{u_i\ell_j}{u\ell}+(\gamma-\gamma^2)\sigma^{ij}_{k}\frac{u_iu_j}{u^2}+\left( \beta \frac{1-\ell}{\ell}+\frac{\gamma}{\ell}\right)\sigma^{ij}_{k}\delta_{ij}\notag\\
&\quad-k\gamma fh^{p-2}\rho^{k+1-q}.\label{4}
\end{align}
We now focus on the first term on the right-hand side of \eqref{4}, namely $\frac{2}{|\nabla u|^{2}}\sigma_{k}^{ij}u_{m}u_{mij}$.
Note that
\begin{align}\label{5}
\sigma_{k}^{ij}b_{ij,m}&=\sigma_{k}^{ij}\Big(\ell_{m}u_{ij}+\ell u_{ijm}+2\ell_{im}u_{j}+2u_{im}\ell_{j}+u_{m}\delta_{ij}\Big)\notag\\
&=f_{m}h^{p-1}\rho^{k+1-q}+(p-1)fh^{p-2}h_{m}\rho^{k+1-q}\notag\\
&+(k+1-q)\rho^{k-q}fh^{p-1}\frac{hh_{m}+h_{r}h_{rm}}{\rho}.
\end{align}
Multiplying \eqref{5} by $\frac{u_{m}}{\ell}$ and summing over the index $m$, we get that
\begin{equation*}
\frac{\sigma_{k}^{ij}}{\ell}\Big(\ell_{m}u_{m}u_{ij}+\ell u_{ijm}u_{m}+2\ell_{im}u_{m}u_{j}+2u_{im}u_{m}u_{j}+|\nabla u|^{2}\Big)=\frac{\sigma_{k}^{ij}}{\ell}b_{ij,m}u_{m}.
\end{equation*}
Multiplying \eqref{2} by $\sigma_{k}^{ij}\frac{\ell_{j}}{\ell}$ and summing over the index $j$, we also get that
\begin{equation*}
-\frac{2}{\ell}\sigma_{k}^{ij}\ell_{j}u_{m}u_{mi}=\frac{|\nabla u|^{2}}{\ell^{2}}\be \sigma_{k}^{ij}\ell_{i}\ell_{j}-\frac{|\nabla u|^{2}}{u\ell}\ga\sigma_{k}^{ij}u_{i}\ell_{j}.
\end{equation*}
Through a similar procedure to \eqref{9}, we can obtain
\begin{equation*}
-\frac{1}{\ell}\sigma_{k}^{ij}u_{ij}u_{m}\ell_{m}=\frac{1}{\ell^{2}}(2\sigma_{k}^{ij}u_{i}\ell_{j}+u\sigma^{ij}_{k}\delta_{ij})u_{m}\ell_{m}-\frac{k}{\ell^{2}}fh^{p-1}\rho^{k+1-q}u_{m}\ell_{m}.
\end{equation*}
Since $h = u \ell$, a direct calculation yields
\begin{equation*}
hu_{m}h_{m}=\ell u_{m}u_{r}b_{rm}+uu_{m}\ell_{r}b_{rm}.
\end{equation*}
Combining \eqref{5} with the equality obtained from the above calculation, we have 
\begin{align}
\sigma_{k}^{ij}u_{m}u_{ijm}&=\frac{f_{m}u_{m}h^{p-1}\rho^{k+1-q}}{\ell}+(p-1)fh^{p-2}\rho^{k+1-q}|\nabla u|^{2}+(\frac{uu_{m}\ell_{m}}{\ell^{2}}-\frac{|\nabla u|^{2}}{\ell})\sigma_{k}^{ii}\notag\\
&+\frac{p-k-1}{\ell^{2}}fh^{p-1}\rho^{k+1-q}u_{m}\ell_{m}+(\frac{2u_{m}\ell_{m}}{\ell^{2}}-\ga\frac{|\nabla u|^{2}}{u\ell})\sigma_{k}^{ij}u_{i}\ell_{j}\notag\\
&+\frac{(k+1-q)fh^{p-1}\rho^{k-q-1}(\ell u_{m}u_{r}b_{rm}+uu_{m}\ell_{r}b_{rm})}{\ell}\notag\\
&-\frac{2(1-\ell)}{\ell}\sigma_{k}^{ij}u_{i}u_{j}+\be\frac{|\nabla u|^{2}}{\ell^{2}}\sigma_{k}^{ij}\ell_{i}\ell_{j}.\label{12}
\end{align}
For the standard metric on spherical cap $\Ctheta$, we have the commutator formulas 
\begin{equation}\label{13}
u_{kij}=u_{ijk}+h_{k}\delta_{ij}-h_{j}\delta_{ki}.
\end{equation}
Using \eqref{13}, we derive
\begin{equation}\label{14}
\sigma_{k}^{ij}u_{m}u_{mij}=\sigma_{k}^{ij}u_{m}u_{ijm}+|\nabla u|^{2}\sigma_{k}^{ii}-\sigma_{k}^{ij}u_{i}u_{j}.
\end{equation}
Inserting \eqref{12} and \eqref{14} into \eqref{4}, we obtain
$$0\geq I+II,$$
where 
\begin{align}\label{21}
I:=\frac{2(k+1-q)}{\ell|\nabla u|^{2}}fh^{p-1}\rho^{k-q-1}(\ell u_{m}u_{r}b_{rm}+uu_{m}\ell_{r}b_{rm}),
\end{align}
and 
\begin{align}
II:=&\frac{2u_{m}f_{m}}{\ell|\nabla u|^2}h^{p-1}\rho^{k-q+1}+\frac{2}{|\nabla u|^2}\sigma_k^{ij}u_{mi}u_{mj}+\frac{2(p-1-k)}{\ell^2|\nabla u|^2}fh^{p-1}\rho^{k+1-q}u_m\ell_m\notag\\
&+\frac{\beta-\beta^2}{\ell^2}\sigma_{k}^{ij}\ell_i\ell_j+(2(p-1)-\gamma k)fh^{p-2}\rho^{k+1-q}+\Big(\frac{\gamma-\gamma^2}{u^2}-\frac{2(2-\ell)}{|\nabla u|^2\ell}\Big)\sigma_k^{ij}u_iu_j\notag\\
&+\Big(4\frac{u_m\ell_m}{\ell^2|\nabla u|^2}+\frac{2\beta\gamma}{u\ell}\Big)\sigma_k^{ij}u_i\ell_j+\Big(\frac{2u\sum_mu_m\ell_m}{|\nabla u|^2\ell^2}+(\beta-2)\frac{1-\ell}{\ell}+\frac{\gamma}{\ell}\Big)\sigma_k^{ii}.\label{15}
\end{align}
To derive a contradiction, we show that $I > 0$ and $II \geq 0$ when $\hat{C}$ is sufficiently large.

On the one hand, by \eqref{assume}, $\ell$ has a uniform bound on $\Ctheta$ and $|\nabla \ell| \leq C(\theta)$, hence
\begin{equation}\label{16}
\frac{u|\nabla\ell|}{\ell|\nabla u|}=O(\hat{C}^{-1/2}),\quad \frac{u|\nabla f|}{f|\nabla u|}=O(\hat{C}^{-1/2}),\quad \frac{u^{2}}{|\nabla u|^{2}}=O(\hat{C}^{-1}).
\end{equation}
By the Cauchy-Schwarz inequality
$$(\sum_{m}u_{m}u_{mi})^{2}\leq (\sum_{m}u_{m}^{2})(\sum_{m}u_{mi}^{2})=|\nabla u|^{2}\sum_{m}u_{mi}^{2},$$
which implies $\sum_{m}u_{mi}^{2}\geq \frac{(\sum_{m}u_{m}u_{mi})^{2}}{|\nabla u|^{2}}$. 
\begin{align}\label{17}
\frac{2}{|\nabla u|^{2}}\sigma_{k}^{ij}u_{mi}u_{mj}\geq \sum_{i}\frac{2\sigma_{k}^{ii}}{|\nabla u|^{2}}\frac{(\sum_{m}u_{m}u_{mi})^{2}}{|\nabla u|^{2}}.
\end{align}
In view of \eqref{16}, \eqref{17} and \eqref{15}, we get 
\begin{align*}
II&\geq fh^{p-2}\rho^{k+1-q}(2(p-1)-k\ga-\frac{c}{\sqrt{\hat{C}}})+(\ga-\frac{c}{\sqrt{\hat{C}}})\sigma_{k}^{ii}\notag\\
&-(\frac{c}{\sqrt{\hat{C}}}+2\ga-\ga^{2})\sigma_{k}^{ij}\frac{|u_{i}||\ell_{j}|}{u\ell}+(\ga-\frac{\ga^{2}}{2})\sigma_{k}^{ij}\frac{\ell_{i}\ell_{j}}{\ell^{2}}\notag\\
&+\frac{(\ga-\ga^{2}/2-\frac{c}{\hat{C}})}{u^{2}}{\sigma_{k}^{ij}}u_{i}u_{j}.
\end{align*}
Let $\ep \in (0,1)$. Choose $\hat{C}$ sufficiently large such that 
\begin{equation*}
\frac{c}{\sqrt{\hat{C}}}< \min\{\frac{1}{2}(2(p-1)-k\ga), \frac{\ga}{2}, 2(\sqrt{1+\ep}-1)a_{\ga}\},\quad \frac{c}{\hat{C}}<\ep a_{\ga}.
\end{equation*}
Hence,
\begin{align*}
II\geq& \frac{2(p-1)-k\ga}{2}fh^{p-2}\rho^{k+1-q}+\frac{\ga}{2}\sigma_{k}^{ii}-2\sqrt{1+\ep}a_{\ga}\sigma_{k}^{ij}\frac{|u_{i}||\ell_{j}|}{u\ell}\\
\quad&+a_{\ga}\sigma_{k}^{ij}\frac{\ell_{i}\ell_{j}}{\ell^{2}}+(1-\ep)a_{\ga}\sigma_{k}^{ij}\frac{u_{i}u_{j}}{u^{2}},
\end{align*}
where $a_{\ga}:=\ga-\frac{\ga^{2}}{2}>0$. Using the Cauchy-Schwarz inequality
$$2\sqrt{1+\ep}\frac{|u_{i}||\ell_{j}|}{u\ell}\leq \frac{1+\ep}{1-\ep}\frac{\ell_{i}^{2}}{\ell^{2}}+(1-\ep)\frac{u_{i}^{2}}{u^{2}},$$
we obtain
$$II\geq \frac{2(p-1)-{k\ga}}{2}fh^{p-2}\rho^{k+1-q}+\sigma_{k}^{ij}\Big(\frac{\ga}{2}-\frac{2\ep}{1-\ep}a_{\ga}\frac{\ell_{i}\ell_{j}}{\ell^{2}}\Big).$$
Note that $\ell_{i}^{2}\leq |\nabla \ell|^{2}\leq c(\theta)\ell^{2}$. Therefore, if $\ep>0$ is sufficiently small such that $$\frac{\ga}{2}-c(\theta)a_{\ga}\frac{2\ep}{1-\ep}>0,$$
which implies $II>0$. On the other hand,
\begin{align}
\ell\sum_{k,m}u_mu_kb_{km}&=\ell\sum_{k,m} u_mu_k(\ell u_{km}+u_k\ell_m+u_m\ell_k+u\delta_{km})\notag\\
&=\ell^2\sum_{k,m}{u_m u_k u_{km}}+2\ell |\nabla u|^2\sum_m{u_m\ell_m}+u\ell|\nabla u|^2\notag\\
&=\ell^2\sum_k u_k\frac{|\nabla u|^2}{2}\left(\gamma\frac{u_k}{u}-\beta\frac{\ell_k}{\ell}\right)+2\ell |\nabla u|^2\sum_m{u_m\ell_m}+u\ell|\nabla u|^2\notag\\
&=\frac{\gamma\ell^2}{2u}|\nabla u|^4+\left(2-\frac{\beta}{2}\right)\ell|\nabla u|^2\sum_{m}{u_m\ell_m}+u\ell|\nabla u|^2,\label{19}
\end{align}
where we used \eqref{2} in the third equality. Similarly, we have 
\begin{equation}\label{20}
uu_{m}\ell_{k}b_{km}=(\frac{|\nabla u|^{2}}{2}\ga\ell+u^{2})u_{m}\ell_{m}+u|\nabla u|^{2}|\nabla \ell|^{2}(1-\frac{\be}{2})+u(u_{m}\ell_{m})^{2}.
\end{equation}
Recalling assumption \eqref{assume}, and combining \eqref{19}, \eqref{20} with \eqref{21}, we have
\begin{align*}
\frac{\ell u_{m}u_{r}b_{rm}+uu_{m}\ell_{r}b_{rm}}{|\nabla u|^{2}}&\geq \left\{\frac{\ga}{2}\ell^{2}\frac{|\nabla u|}{u}-\Big((1+\ga)\ell+\frac{u^{2}}{|\nabla u|^{2}}\Big)|\nabla \ell|\right\}|\nabla u|\\
&\geq \left\{\frac{\ga}{2}\ell^{2}\sqrt{\hat{C}}-\Big((1+\ga)\ell+\frac{1}{\hat{C}}\Big)|\nabla \ell|\right\}|\nabla u|>0
\end{align*}
for $\hat{C}$ sufficiently large. The assumption \eqref{assume} does not hold. The proof is completed.
\end{proof}
The following proposition generalizes \cite{matheng}*{Lemma 3.1} to even capillary convex bodies. We refer the reader to \cite{HuYang2026John}*{Section 4} for the proof.
\begin{proposition}\label{noncollapsing}
Let $\theta\in (0,\frac{\pi}{2})$. Let $\Sigma$ be an even, smooth, strictly convex $\theta$-capillary hypersurface with capillary support function $h$. If $h$ satisfies
\begin{equation*}
      \frac{|\nabla h|^2}{h^{\gamma}} \leq N (\max_{\mathcal{C}_{\theta}} h)^{2-\gamma}, \quad \rm{on} \ \mathcal{C}_{\theta},
\end{equation*}
for some positive constants $\gamma$ and $N$. Then the following non-collapsing estimate holds
\begin{equation*}
    \frac{R}{r}\leq C,
\end{equation*}
where the constant $C$ depends only on $n,\theta,\gamma, N$.
\end{proposition}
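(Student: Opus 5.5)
The plan is to adapt the closed-case argument of Chen--Li/Cabezas-Moreno--Hu (the reference \cite{matheng}*{Lemma 3.1}) to the half-space. Evenness is used to control the position of the origin, and the capillary John ellipsoid theorem of \cite{HuYang2026John} replaces the classical John ellipsoid. Write $\widehat{\Sigma}$ for the capillary convex body and note that evenness with respect to $(\xi_1,\dots,\xi_n)\mapsto(-\xi_1,\dots,-\xi_n)$ means $\widehat\Sigma$ is symmetric under $x'\mapsto -x'$, where $x=(x',x_{n+1})$. Both the hypothesis and the conclusion are invariant under dilations of $\Sigma$, and the dilation of a capillary hypersurface is again capillary with the same $\theta$. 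We may therefore normalize $R=\max h=1$. It then suffices to show $r=\min h\ge c(n,\theta,\gamma,N)>0$.

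\textbf{Step 1 (John ellipsoid).} By the capillary John ellipsoid theorem for even capillary bodies, there is a capillary-type region $E$, determined by an ellipsoid centered on $\partial\R^{n+1}_+$ and symmetric in $x'$, with $E\subset\widehat\Sigma\subset c(n,\theta)E$. Since $R=1$, all semi-axes of $E$ are bounded above by a constant. So $r$ small means some semi-axis $a$ of $E$ is small. Choose coordinates so that this direction, say $E_1$ (horizontal) or $E_{n+1}$ (vertical), realizes the width $\le Ca$. Using $\theta<\pi/2$ and the Robin relation $h=\ell u$ with $\nabla_\mu u=0$, one checks that $\min_{\Ctheta}h$ is comparable, with constants depending on $\theta$, to the minimal width of $\widehat\Sigma$ in the directions attained by $\tilde\nu$. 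Hence $r\approx a$.

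\textbf{Step 2 (using the gradient hypothesis).} Let $\xi_0$ be a point where $h(\xi_0)=r$. Take a unit-speed great-circle arc $\xi(s)$ in $\Ctheta$ from $\xi_0$ toward a point where the support function is $\ge c_0>0$. Such a point exists because some semi-axis of $E$ is $\ge c(n,\theta)$, given that $\max h=1$ and $E$ is comparable to $\widehat\Sigma$. The arc stays inside $\Ctheta$ and has length bounded by $\pi$; since the cap is geodesically convex for $\theta<\pi/2$, such an arc exists. Along it the hypothesis gives $|(h^{1-\gamma/2})'|=(1-\gamma/2)h^{-\gamma/2}|\nabla h|\le(1-\gamma/2)\sqrt N$ when $\gamma<2$. (For $\gamma\ge2$, integrate $\log h$ or $h^{1-\gamma/2}$ analogously; only $\gamma<2$ is needed later.) Integrating gives
\begin{equation*}
c_0^{1-\gamma/2}-r^{1-\gamma/2}\le (1-\tfrac{\gamma}{2})\sqrt N\,L,
\end{equation*}
where $L$ is the arc length. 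By itself this does not bound $r$ from below, so the geometry must enter. In the collapsed direction, $h$ stays of order $a$ on a spherical neighbourhood of $\xi_0$. More precisely, for a body of width $a$ in direction $\xi_0$ and size $\ge c$ in the other directions, $h(\xi)\approx a+|\xi-\xi_0|$ near $\xi_0$, so $|\nabla h|\gtrsim 1$ at points with $h\approx 2a$. The hypothesis then forces $1\lesssim N(2a)^{\gamma}$, that is $a\gtrsim N^{-1/\gamma}$, which is the desired lower bound on $r$.

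\textbf{Main obstacle.} The difficulty is making the comparison $|\nabla h|\gtrsim1$ at some point with $h\lesssim a$ rigorous in the capillary setting. The Gauss image is only the cap $\Ctheta$, so a collapsed direction might lie near or outside $\partial\Ctheta$. Evenness rules out collapse being hidden by an off-center origin in horizontal directions, and a vertical collapse is detected at the pole $E_{n+1}$. To handle this, I would first treat the vertical direction separately. There $h(E_{n+1})$ is the height, while the horizontal extent is $\gtrsim1$, and the boundary contact at angle $\theta$ gives $h\approx a+\sin\theta\,|x'|$-type growth along meridians, which produces the needed gradient. For horizontal collapse I would use symmetry to place the minimum point in the interior of the cap away from $\partial\Ctheta$ by a distance depending on $\theta$. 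The estimate $|\nabla h|\ge |X\cdot\xi^{\perp}|$ comes from $X=\nabla h+h\,(\xi-\cos\theta e)$. It applies at the point where $h$ is minimal along a suitable meridian, and it gives the gradient lower bound with constants depending only on $n$ and $\theta$.
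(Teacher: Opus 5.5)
Your framework is the right kind: normalize $R=1$, then find a point where $h=O(r)$ but $|\nabla h|$ is of order $R$. However, the proposal does not prove that step, and the route you sketch for it fails in the case you call horizontal collapse.

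The sentence ``$h(\xi)\approx a+|\xi-\xi_0|$ near $\xi_0$, so $|\nabla h|\gtrsim 1$ at points with $h\approx 2a$'' is only a heuristic. Two-sided comparability gives no pointwise gradient bound. Moreover, around the minimum point $\xi_0$ you have produced neither a direction in which $h$ grows at rate $\gtrsim R$, nor a geodesic of length $\gg r/R$ in that direction that stays in $\Ctheta$.

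The two claims you then invoke are not correct.
\begin{enumerate}
\item There is no $\theta$-dependent lower bound on the distance from the minimum point of $h$ to $\partial\Ctheta$. Take the even ``roof'' over a thin rectangle $D=[-a,a]\times[-1,1]^{n-1}$ whose faces meet $\partial\R^{n+1}_+$ at angle $\theta$. There the minimum of $h$ over the cap, namely $a\sin\theta$, is attained only on $\partial\Ctheta$, in the directions $\pm E_1$. So for smooth strictly convex capillary approximations of this body, the minimum point lies arbitrarily close to the boundary.
\item A point minimizing $h$ along a meridian is exactly where the meridional derivative of $h$ vanishes, and at the global minimum $\nabla h=0$. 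So nothing forces $|\nabla h|\gtrsim R$ there. The large gradient must be found where $h$ is still $O(r)$ but increasing.
\end{enumerate}

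The missing observation is that for even bodies with $\theta<\pi/2$ there is only one collapse mode, small height, and it is seen from the pole $N$.

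By evenness, $\widehat\Sigma$ contains $(0,H)$ with $H=h(N)$. Hence $h(\xi)\ge H\langle\xi-\cos\theta e,E_{n+1}\rangle\ge H\cos\theta$, that is, $\cos\theta\,H\le r\le H$.

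Now use the supporting half-spaces at $\partial\Ctheta$, namely $\cos\theta\,y_{n+1}+\sin\theta\langle y',\omega\rangle\le\sin\theta\,h_D(\omega)$, where $D$ is the flat face. They show four things:
\begin{enumerate}
\item $\widehat\Sigma$ projects into $D$;
\item $H\le\tan\theta\min h_D$, so horizontal thinness is a special case of small height;
\item $|y'|\le R/\sin\theta$ on $\widehat\Sigma$;
\item $\max h_D\ge R-H$.
\end{enumerate}

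If $H\le\theta R/(2\pi)$, pick $\omega$ with $h_D(\omega)\ge R/2$. Let $g(\phi)$ be the value of $h$ at the point of $\Ctheta$ with normal $\cos\phi\,E_{n+1}+\sin\phi\,\omega$, for $0\le\phi\le\theta$; this arc stays in the cap. Then $g(0)=H$ and $R\phi/\pi\le g(\phi)\le H+\phi R/\sin\theta$.

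The mean value theorem on $[0,2\pi H/R]$ yields a point where $|\nabla h|\ge g'\ge R/(2\pi)$ and $h\le(1+2\pi/\sin\theta)H$. The hypothesis then gives $(H/R)^{\gamma}\ge c(\theta,\gamma)/N$. If instead $H>\theta R/(2\pi)$, the bound is immediate. In both cases $R/r\le C(\theta,\gamma,N)$.

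This is your ``vertical'' case made precise. It already covers every case, and the John ellipsoid is not needed.
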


Combining Lemma \ref{lem2} and Proposition \ref{noncollapsing}, we obtain the following uniform estimates for $h$.
\begin{lemma}\label{lem3}
Let $1<p<q\leq k+1$ and $\theta\in (0,\frac{\pi}{2})$. Suppose $h$ is an even, smooth and strictly convex solution to Eq. \eqref{eq:capillary-Lp-dual-CM}.
 Then there exists some positive constant $C$ depending on $p, q,\theta, f$ such that
\begin{equation}\label{C0}
\frac{1}{C}\leq h \leq C,
\end{equation}
and
\begin{equation}\label{C1}
|\nabla h|\leq C.
\end{equation}
\end{lemma}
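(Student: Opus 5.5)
The plan is to combine Lemma \ref{lem2} with Proposition \ref{noncollapsing}, so that $R\leq Cr$. Then I will evaluate the equation at the extremal points of $h$ and use $p<q$ to turn this ratio bound into absolute bounds on $R$ and $r$.

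\textbf{Step 1: non-collapsing.} Fix some $\ga\in(0,\min\{2(p-1)/n,2\})$, which is possible since $p>1$. Lemma \ref{lem2} then gives $|\nabla h|^2h^{-\ga}\leq \hat C R^{2-\ga}$. Since $h$ is even, smooth and strictly convex, Proposition \ref{noncollapsing} applies with $N=\hat C$ and yields
\[
R\leq C_1 r,
\]
with $C_1$ depending only on $n,p,q,\theta,\|f\|_{C^1}$. The same inequality also gives
\[
|\nabla h|^2\leq \hat C R^{2-\ga}h^{\ga}\leq \hat C R^2,
\]
so $\rho\leq C R$ everywhere, and trivially $\rho\geq h\geq r$.

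\textbf{Step 2: maximum point.} Let $\xi_1$ be a point where $h$ attains $R$.
If $\xi_1$ lies in the interior, then $\nabla h=0$ and $\nabla^2h\leq0$ there. Hence $b\leq R I$, so $\sigma_k(b)\leq \binom nk R^k$ (using $b\in\Gamma_n$). The equation then gives
\[
\min f\, R^{p-1}R^{k+1-q}\leq \binom nk R^k,
\]
that is, $R^{p-q}\leq C$. Since $p<q$, this is a \emph{lower} bound $R\geq c$.
If instead $\xi_1\in\pa\Ctheta$, the Robin condition gives $\nabla_\mu h=\cot\theta h>0$, which is impossible at a maximum. So the maximum is interior.

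\textbf{Step 3: minimum point.} At a minimum point $\xi_2$ of $h$, the boundary is again excluded, since $\nabla_\mu h=\cot\theta\, h>0$ is consistent with a minimum only in the wrong direction. More precisely, at a boundary minimum we would need $\nabla_\mu h\leq0$ along the outward normal, but $\nabla_\mu h>0$.
So $\xi_2$ is interior, with $\nabla h=0$ and $\nabla^2h\geq0$. Then $b\geq rI$ gives
\[
\max f\, r^{p-1}r^{k+1-q}\geq \binom nk r^k,
\]
so $r^{p-q}\geq c$, which is an \emph{upper} bound $r\leq C$.

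\textbf{Step 4: conclusion.} Combining with Step 1:
\[
R\leq C_1 r\leq C \quad\text{and}\quad r\geq R/C_1\geq c/C_1.
\]
This gives \eqref{C0}. Then \eqref{C1} follows from $|\nabla h|^2\leq \hat C R^{2-\ga}h^\ga\leq C$.

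The only genuine obstacle is the non-collapsing step. It is entirely supplied by Lemma \ref{lem2}, which needs $q\leq k+1$ so that the term $I$ has a good sign, together with the capillary John-ellipsoid argument behind Proposition \ref{noncollapsing}. Once these are in hand, the extremal-point arguments are routine. The key point is the direction of the inequalities: because $p<q$, the maximum principle alone only gives $R\geq c$ and $r\leq C$, and the ratio bound is exactly what closes the argument.
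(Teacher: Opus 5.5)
Your overall architecture matches the paper's: Lemma~\ref{lem2} plus Proposition~\ref{noncollapsing} give $R\le C_1 r$, and then extremal-point arguments, because $p<q$, give $R\ge c$ and $r\le C$. Steps 1, 3 and 4 are fine. In particular, your argument that the minimum of $h$ cannot lie on $\pa\Ctheta$ is correct.

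The gap is in Step 2. Your claim that the maximum of $h$ cannot lie on the boundary has the sign backwards. At a boundary maximum, $h$ can only decrease in the inward direction $-\mu$, which forces $\nabla_\mu h\ge 0$. This is perfectly compatible with $\nabla_\mu h=\cot\theta\,h>0$.

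In fact the maximum typically is on the boundary. The function $h=\ell$ is an even, strictly convex solution of \eqref{eq:capillary-Lp-dual-CM} with the positive, even, smooth datum $f=\binom nk\ell^{1-p}(\ell^2+|\nabla\ell|^2)^{\frac{q-k-1}{2}}$. Since $\ell=\sin^2\theta-\cos\theta\,\xi_{n+1}$, it attains its maximum $\sin^2\theta$ exactly on $\pa\Ctheta$. At such a point $\nabla h\neq 0$, so $\rho\neq h$, and $h_{\mu\mu}$ is not controlled by the maximum condition. Hence $b\le RI$ is unavailable and you get no lower bound on $R$. Your lower bound $r\ge R/C_1\ge c/C_1$ in Step 4 rests entirely on this, so $h\ge 1/C$ is not established.

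The paper avoids the problem by working with $u=h/\ell$, which satisfies the Neumann condition $\nabla_\mu u=0$. At a maximum point $\xi_0$ of $u$, whether interior or on the boundary, one has $\nabla u=0$ and $\nabla^2u\le 0$:
\begin{itemize}
\item the tangential block of $\nabla^2u$ equals the boundary Hessian, because $u_\mu=0$;
\item the mixed terms vanish, since $u_{\al n}=-\cot\theta\, u_\al=0$;
\item $u_{\mu\mu}\le 0$, by following the inward geodesic.
\end{itemize}
It follows that $b=\ell\nabla^2u+\nabla u\otimes\nabla\ell+\nabla\ell\otimes\nabla u+uI\le uI$ and $\rho=u(\ell^2+|\nabla\ell|^2)^{1/2}$ at $\xi_0$. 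The equation then gives $(\max u)^{q-p}\ge c\,\min f$, and hence $R\ge h(\xi_0)\ge (1-\cos\theta)\max u\ge c$. If you replace Step 2 by this argument, the rest of your proof goes through unchanged.
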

\begin{proof}
In terms of the capillary support function $u$, \eqref{eq:capillary-Lp-dual-CM} becomes
\begin{equation*}\label{u}
		\renewcommand{\arraystretch}{1.5}
		\left\{
		\begin{array}{rll}
			\sigma_{k}(\ell\nabla^2 u+\nabla u\otimes \nabla \ell +\nabla \ell\otimes \nabla u+uI) &= f (u\ell)^{p-1}\rho(u,\ell)^{k+1-q}, &{\rm in} \ \mathcal{C}_{\theta}, \\
		\nabla_{\mu}u&=0,& \text{on }\partial\Ctheta.
			\end{array}	
			\right.
	\end{equation*}
Assume that $\underset{\Ctheta}{\max}\ u(\xi)$ is attained at $\xi_{0}$, then at $\xi_{0}$ we have
$$\nabla u=0,\quad \nabla^{2}u\leq 0.$$
Therefore, we obtain
$$C_{n}^{k}u^{k}\geq f(u\ell)^{p-1}u^{k+1-q}(\ell^{2}+|\nabla \ell|^{2})^{\frac{k+1-q}{2}},$$
which implies
$$\underset{\Ctheta}{\max}\ h\geq C (\underset{\Ctheta}{\min}\ f)^{\frac{1}{q-p}},$$
where $C$ is a positive constant that may change from line to line, and depends only on $n,k,p,q,\theta$.
Similarly, we also obtain
$$\underset{\Ctheta}{\min}\ h\leq C (\underset{\Ctheta}{\max}\ f)^{\frac{1}{q-p}}.$$
 If $p < q$, we can derive a uniform upper bound for $r$ and a uniform lower bound for $R$. These facts, together with Lemma \ref{lem2} and Proposition \ref{noncollapsing}, yield \eqref{C0}. Equation \eqref{C1} follows directly from \eqref{C0} and Lemma \ref{lem2}.
\end{proof}

	The following full rank theorem ensures the strict convexity of solutions to \eqref{eq:capillary-Lp-dual-CM} when $1\leq k<n$.
 \begin{theorem}\label{full rank}
 Let $1\leq k<n$, $p\geq1$, $q\in \R$ and $\theta \in (0,\frac{\pi}{2})$. Suppose $f$ is a positive smooth function satisfying
\begin{equation}\label{23}
\nabla^{2}{f}^{-\frac{1}{p+k-1}}+gf^{-\frac{1}{p+k-1}}\geq 0
\quad\rm{in }\ \Ctheta
\end{equation}
and the boundary condition 
\begin{equation}\label{24}
\ \ \ \nabla_{\mu}f^{-\frac{1}{p+k-1}}\leq \cot\theta f^{-\frac{1}{p+k-1}}\quad\rm{on }\ \pa\Ctheta.
\end{equation}
If $h$ is a positive and smooth solution of Eq. \eqref{eq:capillary-Lp-dual-CM} with $\nabla^{2}h+hI\geq 0$ on $\Ctheta$, then $\nabla^{2}h+hI$ is positive definite on $\Ctheta$.
 \end{theorem}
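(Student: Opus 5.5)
We argue by a constant rank (full rank) argument for the matrix $b=\nabla^2h+hI$, combined with a strong maximum principle and a Hopf-type boundary argument adapted to the Robin condition. Suppose $b$ is not positive definite somewhere. Let $l$ be the minimal rank of $b$ on $\Ctheta$, attained at some $\xi_0$, and suppose $l<n$. Since $\sigma_k(b)=fh^{p-1}\rho^{k+1-q}>0$, we have $l\ge k$. Near $\xi_0$ we set
$$\phi:=\sigma_{l+1}(b)+\frac{\sigma_{l+2}(b)}{\sigma_{l+1}(b)}$$
(the standard Bian--Guan test function). We aim to show that
$$\sigma_k^{ij}\phi_{ij}\le C(\phi+|\nabla\phi|)$$
in a neighborhood $\mathcal O$ of $\xi_0$ in the interior. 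We then also show that $\phi$ cannot attain a zero minimum on $\partial\Ctheta$ unless $\nabla_\mu\phi$ has the wrong sign. Together these force $\phi\equiv0$ on $\Ctheta$ by connectedness, which will contradict the boundary behaviour.

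\textbf{Interior step.} Rewrite the equation as $F(b):=\sigma_k(b)^{1/(p+k-1)}$, or more precisely as
$$\sigma_k(b)^{\frac1{p+k-1}}\cdot h^{-\frac{p-1}{p+k-1}}\rho^{-\frac{k+1-q}{p+k-1}}=f^{\frac1{p+k-1}}.$$
Following Guan--Xia and Cabezas-Moreno--Hu in the closed case, we differentiate twice at a point where $b$ is diagonal. We split indices into a good set $G$ (the large eigenvalues) and a bad set $B$ (the small ones). The standard computation gives
$$\sigma_k^{ij}\phi_{ij}\le C(\phi+|\nabla\phi|)-c\sum_{i\in B}\Big[\big(f^{-\frac1{p+k-1}}\big)_{ii}+f^{-\frac1{p+k-1}}\Big]\cdot(\text{positive factor}).$$
Here $h$ and $\rho$ enter only through lower order terms in $b$ and the derivatives of $b$. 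The key point is that the gradient factor $\rho^{k+1-q}$ contributes terms of the form $\rho_{ii}$ for $i\in B$. Since $\rho\rho_i=\sum_j h_jb_{ji}$, these terms are controlled by $b_{ii}$ and $b_{ii,j}$ for $i\in B$, which are $O(\phi+|\nabla\phi|)$. The exponent $-1/(p+k-1)$ is chosen exactly so that the $h^{p-1}$ contribution, combined with $\sigma_k$ homogeneity, produces a nonpositive sign (this uses $p\ge1$). Hypothesis \eqref{23} then makes the last term nonpositive. The strong maximum principle yields that $\{\phi=0\}$ is open in the interior, hence it is all of $\Ctheta$ once it meets the interior.

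\textbf{Boundary step (main obstacle).} Suppose the minimal rank is attained only on $\partial\Ctheta$, or that we must rule out $\phi\equiv0$ up to the boundary. Take a frame with $e_n=\mu$. By Lemma \ref{lem1}, $h_{\alpha n}=0$, so $b_{\alpha n}=0$ on $\partial\Ctheta$. Differentiating the Robin condition gives $b_{\alpha\alpha;n}$ in terms of $\cot\theta\, b_{\alpha\alpha}$ and $b_{nn}$ (as in Hu--Ivaki--Scheuer), and $b_{nn;n}$ is read off from the differentiated equation. We would compute $\nabla_\mu\phi$ and show, using \eqref{24} and the Robin relations, that
$$\nabla_\mu\phi\le C\phi \quad\text{on }\partial\Ctheta.$$
The gradient factor is harmless here, since $\nabla_\mu\rho^2=2\cot\theta\,\rho^2$ on the boundary, which follows from $h_{\alpha n}=0$ and the Robin condition. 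With the interior inequality, Hopf's lemma then forbids a strict boundary minimum, unless $\phi\equiv0$. If $\phi\equiv0$ on all of $\Ctheta$, then $b$ has constant rank $l<n$, which contradicts the fact that a capillary hypersurface has a point of positive definite $b$. Alternatively, we obtain a contradiction from the boundary relations, where $\nabla_\mu\log b_{\alpha\alpha}=\cot\theta$ forces the null directions to be non-tangent, which is impossible. The delicate point I expect is verifying the sign of $\nabla_\mu\phi$: the third-derivative terms $b_{ij;n}$ for $i,j\in B$ must be expressed through the Robin data without leaving uncontrolled good-index contributions.
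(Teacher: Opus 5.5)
Your interior step is an acceptable alternative to the paper's. The paper treats the smallest eigenvalue $\lambda_1$ of $b$ as a viscosity supersolution, citing Cabezas-Moreno--Hu; you use the Bian--Guan function. Either way, the final contradiction should come from the minimum point of $h$: it is interior because $\nabla_\mu h=\cot\theta\,h>0$, and there $b\ge hI>0$. You should say this rather than call it a fact.

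The genuine gap is the boundary step, which is the real content of this theorem and which you leave as ``we would compute.''
\begin{enumerate}
\item The inequality you aim for has the wrong sign. Suppose $\phi\ge0$ is a supersolution with $\phi>0$ inside and $\phi(\xi_0)=0$ at some $\xi_0\in\pa\Ctheta$. Hopf's lemma then gives $\nabla_\mu\phi(\xi_0)<0$ for the outward $\mu$, and $\nabla_\mu\phi\le C\phi$ is fully consistent with that. What you must show is $\nabla_\mu\phi(\xi_0)\ge0$, i.e.\ $\nabla_\mu b_{ii}(\xi_0)\ge0$ for every null direction $e_i$.
\item The one boundary identity you state is false. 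In fact
$\nabla_\mu\rho^2=2(hh_\mu+\sum_j h_jh_{j\mu})=2h_\mu(h+h_{\mu\mu})=2\cot\theta\,h\,b_{\mu\mu}$, not $2\cot\theta\,\rho^2$. This matters: with your formula the dual factor would add $\frac{k+1-q}{k}\cot\theta$ to $\nabla_\mu\log\tilde f$, and \eqref{24} would not suffice for $q>k+1$. With the correct formula, the factor drops out exactly when $\mu$ is a null direction.
\item Likewise $\nabla_\mu\log b_{\alpha\alpha}$ is not $\cot\theta$, so your alternative exclusion of tangential null directions does not stand.
\end{enumerate}

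The missing computation is the paper's Step 1, and it avoids the ``delicate point'' you anticipate. By Lemma \ref{lem1}, $b_{\alpha\mu}=h_{\alpha\mu}=0$ on $\pa\Ctheta$, so $\mu$ is an eigenvector of $b$ there. Hence the null space at $\xi_0$ is spanned by $\mu$ and/or tangential eigenvectors, and only the diagonal derivatives $b_{ii,\mu}$ enter.
\begin{enumerate}
\item Tangential null direction. Differentiate $b_{\alpha\mu}=0$ along $e_\alpha$, using $\nabla_{e_\alpha}\mu=\cot\theta\,e_\alpha$ and Codazzi. This gives $b_{\alpha\alpha,\mu}=\cot\theta(b_{\mu\mu}-b_{\alpha\alpha})$, which equals $\cot\theta\,b_{\mu\mu}\ge0$ when $b_{\alpha\alpha}(\xi_0)=0$.
\item Normal null direction, $b_{\mu\mu}(\xi_0)=0$. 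Differentiate $F(b)=\tilde f$, with $F=\sigma_k^{1/k}$, along $\mu$. Insert the tangential identity and use $\sum_\alpha F^{\alpha\alpha}b_{\alpha\alpha}=F=\tilde f$ to get $F^{\mu\mu}b_{\mu\mu,\mu}=\tilde f_\mu+\cot\theta\,\tilde f$. Since $\nabla_\mu\rho=0$ there, this is $\ge0$ iff $\nabla_\mu\log f\ge-(p+k-1)\cot\theta$, which is exactly \eqref{24}.
\end{enumerate}
Note that $b\ge0$ also forces $\nabla_\mu b_{ii}(\xi_0)\le0$, so these computations only give $\nabla_\mu b_{ii}(\xi_0)=0$. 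The contradiction therefore needs the strict Hopf inequality. The paper builds it by hand with the barrier $w=e^{-\alpha\tilde R^2}-e^{-\alpha\tilde r^2}$ on an annulus, applied to $\lambda_1+\varepsilon w$.

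Your $\phi$ can be closed the same way. By Newton's inequality, $0\le\phi\le C\sigma_{l+1}$ near $\xi_0$. Also $\nabla_\mu\sigma_{l+1}(\xi_0)=\sigma_l(G)\sum_{i\in B}b_{ii,\mu}\ge0$. Hence the inward derivative of $\phi$ at $\xi_0$ is $\le0$, i.e.\ $\nabla_\mu\phi(\xi_0)\ge0$, contradicting Hopf.
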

\begin{proof}
Define
$$F=\sigma_{k}^{\frac{1}{k}},\quad \tilde{f}=\hat{f}^{1/k}=(fh^{p-1}\rho^{k+1-q})^{1/k}.$$
Denote by $\lambda_{1}$ the smallest eigenvalue of $b[h]$. Suppose $\lambda_{1}=0$ somewhere in the interior of $\Ctheta$. By \cite{CabezasMorenoHu2025}*{Theorem 3.4, (3.20)} and condition \eqref{23}, there exists a positive constant $C$ depending on $k, p,q,||h||_{C^{3}(\Ctheta)}$ and $||f||_{C^{2}(\Ctheta)}$, such that
$$L[\lambda_{1}]:=F^{ij}\nabla^{2}_{ij}\lambda_{1}-C(\lambda_{1}+|\nabla \lambda_{1}|)\leq 0$$
in the viscosity sense. By the strong maximum principle, $\lambda_{1}\equiv0$ in $\Ctheta$. However, $\nabla_{\mu}h|_{\pa\Ctheta}=\cot\theta h>0$ implies that at the point where $h$ attains its minimum, we have $\lambda_{1}>0$. This is a contradiction.

Choose an orthonormal frame $\{e_{i}\}_{i=1}^{n}$ at $\xi_{0}$ such that
$$e_{1}=\mu,\quad e_{\al}\in T_{\xi_{0}}\pa \Ctheta.$$
We next assume that $\lambda_{1}(\xi_{0}) = 0$ for some $\xi_{0} \in \partial\Ctheta$ while $\lambda_{1} > 0$ in the interior. Then, by following the proof of \cite{HuIvakiScheuer2025}*{Theorem 3.1} with minor modifications, we obtain a contradiction. For completeness, we sketch the argument below. 

{\bf Step1}. We prove
$$b_{ii}(\xi_{0})=0\quad\Rightarrow\quad\nabla_{\mu}b_{ii}(\xi_{0})\geq 0.$$
By the Weingarten equation and Gaussian formula, we have
\begin{equation}\label{GW}
b_{\al\al,\mu}=\cot\theta(b_{\mu\mu}-b_{\al\al}),
\end{equation}
where we used the fact $h_{\al \mu}=0$.
For the case $i=\al\geq2$, this follows immediately from \eqref{GW}. For the case $i=1$, recall that \eqref{eq:capillary-Lp-dual-CM} is equivalent to
\begin{equation}\label{feq}
F(b[h])=\tilde{f}\quad \text{in}\ \Ctheta.
\end{equation}
Differentiating \eqref{feq} in the $\mu$-direction gives 
$$\sum_{i}F^{ii}b_{ii,\mu}=\tilde{f}_{\mu}.$$
By \eqref{GW}, we obtain 
\begin{equation}\label{25}
F^{\mu\mu}b_{\mu\mu,\mu}=\tilde{f}_{\mu}+\sum_{\al}F^{\al\al}\cot\theta(b_{\al\al}-b_{\mu\mu}).
\end{equation}
Note that $b_{\mu\mu}(\xi_{0})=0$, and by the $1$-homogeneity of $F$, we get
$$\sum_{\al}F^{\al\al}b_{\al\al}=F=\tilde{f}.$$
Evaluating \eqref{25} at $\xi_{0}$ yields
$$b_{\mu\mu,\mu}=\frac{\tilde{f}_{\mu}+\tilde{f}\cot\theta}{F^{\mu\mu}}.$$
Thus,
$$
\frac{\tilde{f}_{\mu}+\tilde{f}\cot\theta}{F^{\mu\mu}}\geq0\Leftrightarrow \nabla_{\mu}(\log \tilde{f})\geq -\cot\theta.
$$
Using properties (1) and (2) in Lemma \ref{lem1}, we require that
\begin{align*}
&\frac{1}{k}\nabla_{\mu}\Big(\log f+(p-1)\log h+(k+1-q)\log \rho\Big)\\
&=\frac{1}{k}\Big(\frac{\nabla_{\mu}f}{f}+(p-1)\cot\theta+(k+1-q)\frac{\cot\theta hb_{\mu\mu}}{\rho^{2}}\Big)\\
&=\frac{1}{k}\Big(\frac{\nabla_{\mu}f}{f}+(p-1)\cot\theta\Big).
\end{align*}
Therefore
$$\nabla_{\mu}(\log \tilde{f})\geq -\cot\theta\Leftrightarrow \nabla_{\mu}\log f\geq -(k+1-p)\cot\theta\quad \text{on}\ \pa\Ctheta,$$
which is precisely \eqref{24}.

{\bf Step2}. Consider an interior ball $B_{\tilde{R}}(x_0)\subset\Ctheta$ touching at $\xi_{0}$. Define an annular region $A_{\tilde{R},\tilde{\rho}}=B_{\tilde{R}}(x_0)\setminus  \operatorname{int}(B_{\tilde{\rho}}(x_0))$ for some $0<\tilde{\rho}<\tilde{R}$.
For $x\in \Ctheta$, let $\tilde{r}(x)=\operatorname{dist}(x,x_0)$ denote the distance of $x$ to $x_0$. We define
$$w(x)=e^{-\al \tilde{R}^2}-e^{-\al \tilde{r}(x)^2}.$$
For any $x\in A_{\tilde{R},\tilde{\rho}}$, the distance function $\tilde{r}(x)$ satisfies 
$$
\nabla^2_{ij}\tilde{r}(x)=\cot \tilde{r}(x) (g_{ij}-\nabla_{i}\tilde{r} \nabla_{j} \tilde{r}).
$$
Therefore,
\begin{align*}
\nabla_{i} w &=2\al \tilde{r} \nabla_{i}\tilde{ r} e^{-\al \tilde{r}^2},\\
\nabla^2_{i,j} w&=-4\al^2 \tilde{r}^2 \nabla_{i} \tilde{r}  \nabla_{j} \tilde{r} e^{-\al \tilde{r}^2}+2\al \nabla_{i}\tilde{r} \nabla_{j} \tilde{r} e^{-\al \tilde{r}^2}+2\al \tilde{r} \nabla^2_{i,j} \tilde{r} e^{-\al \tilde{r}^2} \\
&=e^{-\al \tilde{r}^2}\Big( (-4\al^2 \tilde{r}^2+2\al-2\al \tilde{r} \cot\tilde{r}) \nabla_{i} \tilde{r} \nabla_{j} \tilde{r}+2\al \tilde{r} \cot \tilde{r} g_{ij}\Big)
\end{align*}	
and
\begin{align*}
L[w]= &~e^{-\al \tilde{r}^2}(-4\al^2 \tilde{r}^2+2\al-2\al \tilde{r}\cot \tilde{r})|\nabla \tilde{r}|_{\dot{F}}^2\\
 &~+2\al \tilde{r}\cot \tilde{r} e^{-\al \tilde{r}^2}\operatorname{tr}(\dot{F})-2 c \al \tilde{r} e^{-\al \tilde{r}^2}+c(e^{-\al \tilde{r}^2}-e^{-\al \tilde{R}^2}).
\end{align*}

Assume that $\lambda \delta_{ij} \leq F^{ij} \leq \Lambda \delta_{ij}$ in $B_{\tilde{R}}(x_0)$, where $0<\lambda\leq \Lambda$ are some constants. Note that $0\leq \tilde{r}\cot \tilde{r} \leq 1$ for any $\tilde{\rho} \leq \tilde{r}\leq \tilde{R}$. Now take $\al>0$ sufficiently large so that in $A_{\tilde{R},\tilde{\rho}}$, 
\begin{equation*}
L[w] \leq -e^{-\al \tilde{r}^2}\left[ (4\al^2\tilde{r}^2-2\al)\lambda -2\al n  \Lambda +2c\al \tilde{r}-c\right]-ce^{-\al \tilde{R}^2}<0.
\end{equation*}

{\bf Step3}: Since $\lambda_1(x)>0$ on $\partial B_{\tilde{\rho}}(x_0)$, there exists $\varepsilon>0$ such that 
$$
\psi(x):=\lambda_1(x)+\varepsilon w(x) > 0
$$
on $\partial B_{\tilde{\rho}}(x_0)$. Note that $w = 0$ on $\partial B_{\tilde{R}}(x_0)$ and by our assumption there is no other point on $\partial B_{\tilde{R}}(x_0) \setminus \{\xi_{0}\}$ where $\lambda_1 = 0$; hence, $\psi > 0$ on $\partial B_{\tilde{R}}(x_0) \setminus \{\xi_{0}\}$. By the maximum principle for the viscosity supersolution $\psi$, we have $\psi\geq 0$ in the annulus, where $\psi$ is also the smallest eigenvalue of
$$
S_{ij}=b_{ij}+\varepsilon w g_{ij}.
$$
Suppose that at $\xi_{0}$ the zero eigenvalue of $S(\xi_{0})$ is attained in direction $e_{i}$, i.e., we also have $b_{ii} = 0$.
Let $\ga$ be a unit speed geodesic in direction $-\mu$ and $e_i$ be parallel transported along $\ga$. Then from {\bf Step 1} we get
$$
0\leq \frac{d}{dt}\Big|_{t=0^+}S(\gamma(t))(e_i,e_i)=\nabla_{-\mu}b_{ii}+\varepsilon  \nabla_{-\mu}w\leq-2\alpha \varepsilon \tilde{R} <0,
$$
a contradiction.
\end{proof}
Inspired by \cite{Gcpam,jfa}, we consider the following general Christoffel--Minkowski type equation with Robin boundary condition:
\begin{equation}\label{general}
		\renewcommand{\arraystretch}{1.5}
		\left\{
		\begin{array}{rll}
			\sigma_{k}\big(\nabla^2h+hI\big)&=\hat{f}(\xi, h, \nabla h),& \text{in }\Ctheta,\\
		\nabla_{\mu}h&=\cot\theta h,& \text{on }\partial\Ctheta.
			\end{array}	
			\right.
	\end{equation}
Let $N:=(0,\cdots,0,1-\cos\theta) \in \Ctheta$. We introduce the function $d:=\frac{1}{2\theta}d_{N}^{2}(\xi)$, where $d_{N}(\xi)$ is geodesic distance function from $\xi$ to $N$ on $\Ctheta$. Clearly $d$ is well-defined and smooth for all $\xi \in \Ctheta$. A direct computation shows that $d$ satisfies $d=\frac{\theta}{2}$, $\nabla d=\mu$ on $\pa\Ctheta$ and 
$$(\nabla^{2}_{ij}d)\geq \min \left\{\frac{1}{\theta}, \cot\theta\right\}\delta_{ij},\quad \zeta_{\mu}|_{\pa\Ctheta}=e^{-\frac{\theta}{2}}.$$
Using the distance function, we construct a boundary barrier term $\mathcal{B}:=\langle \nabla h, \nabla \zeta \rangle$ with $\zeta:=1-e^{-d}$. By choosing suitable test functions and applying the maximum principle both on the boundary and in the interior, we obtain a global $C^{2}$ estimate.
\begin{theorem}\label{thm2}
Let $1\leq k\leq n$. Suppose $h$ is a positive, smooth and strictly convex solution to Eq. \eqref{general}. Then there exists a positive $C$ such that
$$\Delta h+nh\leq C,$$
where $C$ depends on $n,k,\underset{\Ctheta}{\min}\ h, ||h||_{C^{1}(\Ctheta)}, \underset{\Ctheta}{\min} \hat{f}$ and $||\hat{f}||_{C^{2}(\Ctheta)}$.
\end{theorem}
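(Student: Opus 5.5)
We bound $\Delta h+nh=\operatorname{tr} b$ from above by applying the maximum principle to a test function with a boundary barrier built from $\mathcal{B}=\langle\nabla h,\nabla\zeta\rangle$. Since $b>0$, $\operatorname{tr} b$ controls every entry of $b$, so it suffices to bound $H:=\sigma_1(b)=\Delta h+nh$. Following \cite{Gcpam,jfa}, we consider
$$
Q:=\log H - A\,\mathcal{B} + \frac{B}{2}|\nabla h|^2 ,
$$
or, if needed, $\log H+\frac{B}{2}(h^2+|\nabla h|^2)-A\mathcal{B}$. The constants $A,B>0$ will be fixed depending only on the data listed in the statement. Suppose $Q$ attains its maximum at $\xi_0$. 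We treat the cases $\xi_0\in\pa\Ctheta$ and $\xi_0$ interior separately.

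\emph{Boundary case.} Take a frame with $e_n=\mu$ at $\xi_0$. By Lemma \ref{lem1}(1)(2) and the commutation relations (as in \eqref{GW}), we have $b_{\al\al,\mu}=\cot\theta(b_{nn}-b_{\al\al})$. Hence
$$\nabla_\mu\sum_\al b_{\al\al}=\cot\theta\big((n-1)b_{nn}-\textstyle\sum_\al b_{\al\al}\big).$$
The term $b_{nn,\mu}$ is not controlled directly. We handle it by differentiating the equation $\sigma_k^{ii}b_{ii,\mu}=\nabla_\mu\hat f$, which expresses $\sigma_k^{nn}b_{nn,\mu}$ through $\sigma_k^{\al\al}b_{\al\al,\mu}$ and $\nabla_\mu \hat f$. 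This gives only a one-sided estimate, so the barrier is essential. Using $\nabla\zeta=e^{-\theta/2}\mu$ and $d$ convex on the boundary, we compute
$$\nabla_\mu\mathcal{B}=h_{\mu\mu}\zeta_\mu+\langle\nabla h,\nabla_\mu\nabla\zeta\rangle \ge c_0\,h_{\mu\mu}-C,$$
with $c_0=e^{-\theta/2}$. Since $h_{\mu\mu}=b_{nn}-h$, choosing $A$ large makes $-A\nabla_\mu\mathcal B$ dominate $\nabla_\mu\log H$ whenever $H$ is large. Here we use that $b_{nn}\ge H/n$ or $\sum_\al b_{\al\al}\ge (n-1)H/n$, and split according to which holds. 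The gradient term contributes $\nabla_\mu\frac{B}{2}|\nabla h|^2=B(h_\al h_{\al n}+h_nh_{nn})=B\cot\theta\,h\,h_{nn}$, which has the same structure and is absorbed by the same splitting. The conclusion is $\nabla_\mu Q<0$ when $H\ge C$, contradicting maximality. Hence either $H\le C$ or $\xi_0$ is interior.

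\emph{Interior case.} Work in the concave form $F=\sigma_k^{1/k}$, $F(b)=\tilde f:=\hat f^{1/k}$. Differentiate twice and use the commutator $b_{ij,ll}=b_{ll,ij}+\dots$ (terms of order $H$). Concavity of $F$ discards $F^{ij,rs}b_{ij,1}b_{rs,1}$ after summing over directions. The term $\tilde f_{ll}$ contains $\hat f_{p_m}h_{mll}$ and $\hat f_{p_mp_r}h_{ml}h_{rl}$. The first is rewritten through $b_{ll,m}$ and then through the critical-point equation for $Q$. The second is quadratic in $b$; it is controlled by $\frac{B}{2}F^{ij}\nabla_{ij}|\nabla h|^2\ge B F^{ij}h_{mi}h_{mj}-C$, together with $\sum F^{ii}\ge c>0$ and $F^{ij}b_{mi}b_{mj}\ge c\, b_{\max}\ge cH/n$ (Newton--Maclaurin, $\min\hat f>0$). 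The term $-A F^{ij}\mathcal B_{ij}$ contains $A F^{ij}h_{mij}\zeta_m$, which reduces via the differentiated equation to $O(AH)$ lower-order pieces. It also contains $2AF^{ij}h_{mi}\zeta_{mj}$, which equals $2AF^{ij}b_{mi}\zeta_{mj}$ up to bounded terms. This last term requires $B\gg A$ to absorb it into $B F^{ij}b_{mi}b_{mj}$. The first-order terms $|\nabla H|^2/H^2$ are handled through the critical-point relation $\nabla\log H=A\nabla\mathcal B-B h_m h_{mi}$. Together these yield $0\ge cB\,H-C(A,B)$, hence $H\le C$.

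\emph{Main obstacle.} The main difficulty is the boundary step: $b_{nn,\mu}$ has no sign from the Robin condition alone. We must check that the barrier coefficient $A$, chosen to handle the boundary, is compatible with the choice $B\gg A$ needed in the interior. We first try taking $A$ depending only on the $C^1$ data and $\theta$, then fixing $B$.
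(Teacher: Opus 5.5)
Your proposal has two genuine gaps. They sit exactly where the paper needs its more elaborate test function $\Phi=\frac1m\log P_m+\frac A2\ell^2|\nabla u|^2-K\mathcal B+M\log h-L\zeta$. In the paper's notation, $A$ multiplies the gradient term and $K$ the barrier.

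\emph{Interior.} You discard the concavity term and say that $F^{ij}H_iH_j/H^2$ is handled by the critical-point relation. That substitution fails. In a frame where $b$ is diagonal, the relation gives $H_i/H=(\lambda_i-h)(A\zeta_i-Bh_i)+Ah_m\zeta_{mi}$. The term to be absorbed is therefore essentially $\sum_iF^{ii}\lambda_i^2(A\zeta_i-Bh_i)^2$, which in the direction of the largest eigenvalue is of order $B^2h_1^2F^{11}\lambda_1^2$. The only positive term of this type you have is $BF^{ij}h_{mi}h_{mj}\approx B\sum_iF^{ii}\lambda_i^2$, so enlarging $B$ makes the imbalance worse.

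Keeping the concavity term does not rescue the argument for $F=\sigma_k^{1/k}$ either. If $b_{11,1}$ is the only nonzero third derivative, then $-F^{11,11}b_{11,1}^2=(k-1)(F^{11})^2b_{11,1}^2/F$. This recovers only the fraction $\frac{k-1}{k}$ of $F^{11}(\nabla_1H)^2/H$ as $\lambda_1\to\infty$ with the other eigenvalues bounded.

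Controlling this third-order term in the $F^{11}$ direction is the well-known obstruction for $\sigma_k$-equations whose right-hand side depends on $\nabla h$, when $1<k<n$. It is precisely why the paper replaces $\log H$ by $\frac1m\log P_m$ with $P_m=\sum_j\lambda_j^m$. Lemmas \ref{h1}--\ref{h3} (after Guan--Ren--Wang and Li--Ren--Wang) show that the extra positive terms $B_i,C_i,D_i$ produced by $P_m$ dominate $E_i$ for large $m$, using a case distinction on the ratios $\lambda_s/\lambda_1$. Your sketch has no substitute for this step, so the claimed inequality $0\ge cBH-C(A,B)$ does not follow.

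\emph{Boundary.} At a boundary maximum, using $h_{\al\mu}=0$, you get $\frac B2\nabla_\mu|\nabla h|^2=Bh_\mu h_{\mu\mu}=B\cot\theta\,h\,(b_{nn}-h)$. For your variant, $\frac B2\nabla_\mu\rho^2=B\cot\theta\,h\,b_{nn}$. Meanwhile $-A\nabla_\mu\mathcal B=-Ae^{-\theta/2}b_{nn}+O(A)$ and $\nabla_\mu\log H\le C(1+b_{nn})$.

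Hence the coefficient of $b_{nn}$ in $\nabla_\mu Q$ is $C-Ae^{-\theta/2}+B\cot\theta\,h$. This forces $A\gtrsim B$, the opposite of the ordering $B\gg A$ you want in the interior. Since this contribution is positive and proportional to $b_{nn}$, splitting according to whether $b_{nn}\ge H/n$ does not absorb it.

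Moreover, the $O(A)$ remainders carry no sign; an example is $+Ae^{-\theta/2}h$, coming from $h_{\mu\mu}=b_{nn}-h$. They must also be beaten when $b_{nn}$ stays bounded while $H\to\infty$. In that regime the negative contribution of $\nabla_\mu\log H$ is only $O(1)$, independent of $A$, and nothing in your $Q$ supplies a uniformly negative normal derivative.

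The paper removes both problems through its choice of test function:
\begin{enumerate}
\item The gradient term $\frac A2\ell^2|\nabla u|^2$ has identically vanishing normal derivative on $\pa\Ctheta$ by Lemma \ref{lem1}(3),(4), since $\ell\ell_\mu|\nabla u|^2+\ell^2u_\al u_{\al\mu}=\cot\theta\,\ell^2|\nabla u|^2-\cot\theta\,\ell^2|\nabla u|^2=0$. So the barrier coefficient $K$ only has to beat a constant depending on the data.
\item The term $-L\zeta$ contributes the negative constant $-Le^{-\theta/2}$, which kills the $O(K)$ remainders.
\item The term $M\log h$ with $M<0$ supplies $-M\sum_i\sigma_k^{ii}$ in the interior, absorbing the $O(A+K+L)\sum_i\sigma_k^{ii}$ terms.
\end{enumerate}
The constants are chosen in the order $-M\gg A\gg L\gg K$.
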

\begin{proof}
We consider the auxiliary function 
\begin{equation}\label{test}
\Phi=\frac{1}{m}\log P_{m}+\frac{A}{2}\ell^{2}|\nabla u|^{2}-K\mathcal{B}+M\log h-L\zeta,
\end{equation}
where $A, K, -M, L$ are positive constants to be determined later, and 
$$P_{m}= \sum_{j}\lambda_{j}^{m}, \quad m\geq 2.$$
Here $\lambda_{1}, \lambda_{2},\cdots, \lambda_{n}$ are the eigenvalues of the spherical Hessian $\nabla^{2}h+hI$.
Assume that $\Phi$ attains its maximum at some point, say $\xi_{0} \in \Ctheta$. We divide the proof into two cases: either $\xi_{0} \in \pa\Ctheta$ or $\xi_{0} \in \Ctheta\setminus\pa\Ctheta$.

{\bf Case I}. $\xi_{0} \in \pa\Ctheta$. Let $\{e_{i}\}_{i=1}^{n}$ be an orthonormal frame around $\xi_{0}$ such that $e_{n}=\mu$ and $b[h]$ is diagonal. First, we have
\begin{equation*}
\frac{1}{m}\nabla_{\mu}\log P_{m}=\frac{\sum_{\al}^{n-1}\lambda_{\al}^{m-1}b_{\al\al,\mu}+\lambda_{\mu}^{m-1}b_{\mu\mu,\mu}}{P_{m}}.
\end{equation*}
We differentiate \eqref{general} and deduce
\begin{equation}\label{c22}
\hat{f}_{\mu}\leq C_{1}(1+b_{\mu\mu}),
\end{equation}
where we used again the part $(2)$ in Lemma \ref{lem1}.
We also have
\begin{equation}\label{c23}
\sigma_{k}^{\mu\mu}b_{\mu\mu,\mu}+\sum_{\al}^{n-1}\sigma_{k}^{\al\al}b_{\al\al,\mu}=\hat{f}_{\mu}.
\end{equation}
Combining with \eqref{c22} and \eqref{c23}, we have
\begin{align*}
\nabla_{\mu} \frac{1}{m}\log P_{m}&\leq \frac{\cot\theta(b_{\mu\mu}-b_{\al\al})\Big(\lambda_{\al}^{m-1}-\lambda_{\mu}^{m-1}\frac{\sigma_{k}^{\al\al}}{\sigma_{k}^{\mu\mu}}\Big)}{P_{m}}+\frac{\lambda_{\mu}^{m-1}\hat{f}C_{1}(1+b_{\mu\mu})}{P_{m}\sigma_{k}^{\mu\mu}}\\
&:=T_{1}+T_{2}.
\end{align*}
On the one hand, we show that $T_{1}\leq 0$ and $T_{2}\leq C_{2}(1+b_{\mu\mu})$. If $\lambda_{\al}\geq \lambda_{\mu}$, we have
$$\sigma_{k}^{\al\al}=\sigma_{k-1}(\lambda|\al)\leq \sigma_{k-1}(\lambda|\mu)=\sigma_{k}^{nn}\quad\Rightarrow \frac{\sigma_{k}^{\al\al}}{\sigma_{k}^{nn}}\leq 1.$$
Then 
$$T_{1}:=\frac{\cot\theta(b_{\mu\mu}-b_{\al\al})\Big(\lambda_{\al}^{m-1}-\lambda_{\mu}^{m-1}\frac{\sigma_{k}^{\al\al}}{\sigma_{k}^{\mu\mu}}\Big)}{P_{m}}\leq 0.$$
The same discussion also applies to the case $\lambda_{\mu} \geq \lambda_{\alpha}$.
On the other hand, from Proposition \ref{basic} and Proposition \ref{NM}, we have
\begin{equation}\label{c24}
\hat{f}=\sigma_{k}=\lambda_{\mu}\sigma_{k-1}(\lambda|\mu)+\sigma_{k}(\lambda|\mu),
\end{equation}
and 
\begin{equation}\label{c25}
\left[\frac{\sigma_{k-1}(\lambda|\mu)}{\binom{n-1}{k-1}}\right]^{\frac{1}{k-1}}\geq \left[\frac{\sigma_{k}(\lambda|\mu)}{\binom{n-1}{k}}\right]^{\frac{1}{k}}.
\end{equation}
Using \eqref{c25}, for some positive constant $C_{n,k}$, depending only on $n,k$, we get
$$\sigma_{k}(\lambda|\mu)\leq C_{n,k}\sigma_{k-1}(\lambda|\mu)\underset{i}{\max}\ \lambda_{i}.$$
So by \eqref{c24}, we have
$$\frac{\hat{f}}{\sigma_{k}^{\mu\mu}}\leq C_{3}\underset{i}{\max}\ \lambda_{i}.$$
Then
$$T_{2}:=\frac{\lambda_{\mu}^{m-1}\hat{f}C_{1}(1+b_{\mu\mu})}{P_{m}\sigma_{k}^{\mu\mu}}\leq\frac{C_{3}\underset{i}{\max}\ \lambda_{i}^{m}C_{1}(1+b_{\mu\mu})}{\underset{i}{\max}\ \lambda_{i}^{m}}:=C_{4}(1+b_{\mu\mu}).$$
At $\xi_{0}$, the maximum value condition implies
\begin{align*}
0\leq \nabla_{\mu}\Phi\leq& C_{4}(1+b_{\mu\mu})+A\ell\nabla_{\mu}\ell|\nabla u|^{2}+A\ell^{2}u_{k}u_{k\mu}-Ke^{-\frac{\theta}{2}}b_{\mu\mu}+KC_{5}+M\cot\theta-Le^{-\frac{\theta}{2}}\\
\leq & b_{\mu\mu}(C_{4}-Ke^{-\frac{\theta}{2}})+(C_{4}+KC_5{}-Le^{-\frac{\theta}{2}}),
\end{align*}
where we used Lemma \ref{lem1}. Choosing $L\gg K\gg1$ yields a contradiction. 

{\bf Case II}. $\xi_{0} \in \Ctheta\setminus\pa\Ctheta$. Hence at $\xi_{0}$, we have
\begin{equation}\label{c25-foc}
\frac{\sum_{j}\lambda_{j}^{m-1}b_{jj,i}}{P_{m}}+A\ell|\nabla u|^{2}\ell_{i}+A\ell^{2}u_{k}u_{ki}-K(h_{ki}\zeta_{k}+h_{k}\zeta_{ki})+M\frac{h_{i}}{h}-L\zeta_{i}=0,
\end{equation}
and
\begin{equation}
\begin{split}
\label{bjh}
0\geq &\frac{1}{P_{m}}\left(\sum_{j}\lambda^{m-1}_{j}b_{jj;ii}+(m-1)\sum_{j}\lambda^{m-2}_{j}b^{2}_{jj;i}+\sum_{p,q;p\neq q}\frac{\lambda^{m-1}_{p}-\lambda^{m-1}_{q}}{\lambda_{p}-\lambda_{q}}b^{2}_{pq;i}  \right)\\
&\quad -\frac{m}{P^{2}_{m}}(\sum_{j}\lambda^{m-1}_{j}b_{jj;i})^{2}+M\frac{h_{ii}}{h}-M\frac{h^{2}_{i}}{h^{2}}-L\zeta_{ii}+A\ell^{2}u_{ki}u_{ki}+A\ell^{2}u_{k}u_{kii}\\\
&+\ell_{i}^{2}A|\nabla u|^{2}+4A\ell u_{k}\ell_{i}u_{ki}-K(h_{kii}\zeta_{k}+2h_{ki}\zeta_{ki}+h_{k}\zeta_{kii})-A\ell^{2}|\nabla u|^{2}.\ 
\end{split}
\end{equation}
 Differentiating \eqref{general} twice, at $\xi_{0}$, we obtain
\begin{equation}\label{bjk}
\sum_{i}\sigma^{ii}_{k}b_{ii;j}=\hat{f}_{h_{j}}h_{jj}+\hat{f}_{h}h_{j}+\hat{f}_{j},
\end{equation}
and
\begin{equation}
\begin{split}
\label{bjl}
\sum_{i}\sigma^{ii}_{k}b_{ii;jj}+\sum_{p,q,r,s}\sigma^{pq,rs}_{k}b_{pq;j}b_{rs;j}&\geq  -C-C |h_{jj}|-C h^{2}_{jj}+\sum_{s}\hat{f}_{h_{s}}h_{sj;j}\\
&\geq -C-Cb_{jj}-Cb^{2}_{jj}+\sum_{s}\hat{f}_{h_{s}}b_{sj;j}.
\end{split}
\end{equation}
Here, the positive constants $C$ depend on $\underset{\Ctheta}{\min}h, ||h||_{C^{1}(\Ctheta)}, \underset{\Ctheta}{\min}\hat{f}$ and $||\hat{f}||_{C^{2}(\Ctheta)}$.

Now using the Ricci identity $b_{ii;jj}=b_{jj;ii}+b_{ii}-b_{jj}$, we obtain
\begin{equation}\label{bjp}
\sum_{i}\sigma^{ii}_{k}b_{jj;ii}=\sum_{i}\sigma^{ii}_{k}b_{ii;jj}+b_{jj}\sum_{i}\sigma^{ii}_{k}-k\hat{f}.
\end{equation}
Multiplying both sides of \eqref{bjh} by $\sigma^{ii}_{k}$ and using \eqref{bjk}, \eqref{bjl}, and \eqref{bjp}, we obtain

\begin{equation}
\begin{split}
\label{zx}
0&\geq \frac{1}{P_{m}}\sum_{j}\lambda^{m-1}_{j}\Big(-C_{7}-C_{8}b_{jj}-C_{9}b^{2}_{jj}+\sum_{s}\hat{f}_{h_{s}}b_{sj;j}-W(\sigma_{k})_{j}^{2}+W(\sigma_{k})_{j}^{2}\\
&+b_{jj}\sum_{i}\sigma^{ii}_{k}-k\hat{f}-\sum_{p,q,r,s}\sigma^{pq,rs}_{k}b_{pq;j}b_{rs;j}\Big)\\
&\quad +\frac{1}{P_{m}}(m-1)\sum_{i}\sigma^{ii}_{k}\sum_{j}\lambda^{m-2}_{j}b^{2}_{jj;i}+\frac{1}{P_{m}}\sum_{i}\sigma^{ii}_{k}\sum_{p,q;p\neq q}\frac{\lambda^{m-1}_{p}-\lambda^{m-1}_{q}}{\lambda_{p}-\lambda_{q}}b^{2}_{pq;i}\\
&\quad-\sum_{i}\frac{m\sigma^{ii}_{k}}{P^{2}_{m}}(\sum_{j}\lambda^{m-1}_{j}b_{jj;i})^{2}
+M\sum_{i}\sigma^{ii}_{k}\frac{h_{ii}}{h}-M\frac{\sum_{i}\sigma^{ii}_{k}h^{2}_{i}}{h^{2}}-L\sigma_{k}^{ii}\zeta_{ii}\\
&+A\sum_{i}\sigma_{k}^{ii}(\ell_{i}^{2}|\nabla u|^{2}+4\ell\ell_{i}u_{k}u_{ki}-\ell^{2}|\nabla u|^{2})+A\sum_{i}\sigma_{k}^{ii}(\ell^{2}u_{ki}u_{ki})+A\ell^{2}\sum_{i}\sigma_{k}^{ii}u_{k}u_{kii}\\
&-K\sum_{i}\sigma_{k}^{ii}(h_{kii}\zeta_{k}+2h_{ki}\zeta_{ki}+h_{k}\zeta_{kii}).
\end{split}
\end{equation}
We now need to deal with the two terms $\frac{\sum_{s,j}\lbd_{j}^{m-1}\hat{f}_{h_ {s}}b_{sj,j}}{P_{m}}$ and $A\ell^{2}\sum_{i}\sigma_{k}^{ii}u_{k}u_{kii}$ in \eqref{zx}. Combining with \eqref{c25-foc} and the fact
$$b_{ij}=\ell u_{ij}+\ell_{i}u_{j}+\ell_{j}u_{i}+u\delta_{ij},$$
then
\begin{align}\label{c27}
&\frac{1}{P_{m}}\sum_{s,j}\lbd_{j}^{m-1}\hat{f}_{h_{s}}b_{sj,j}+A\ell^{2}\sum_{i}\sigma_{k}^{ii}u_{k}u_{kii}\notag\\
&=\sum_{s}\hat{f}_{h_{s}}\Big(-A\ell|\nabla u|^{2}\ell_{s}-A\ell^{2}u_{k}u_{ks}+K(h_{ks}\zeta_{k}+h_{k}\zeta_{ks})-M\frac{h_{s}}{h}+L\zeta_{s}\Big)\notag\\
&\quad\quad+A\sum_{i}\sigma_{k}^{ii}u_{k}\ell((\ell u_{ii})_{k}-\ell_{k}u_{ii})\notag\\
&= \sum_{s}\hat{f}_{h_{s}}\Big(-A\ell|\nabla u|^{2}\ell_{s}-A\ell^{2}u_{k}u_{ks}+K(h_{ks}\zeta_{k}+h_{k}\zeta_{ks})-M\frac{h_{s}}{h}+L\zeta_{s}\Big)\notag\\
&\quad\quad+A\sum_{i}\sigma_{k}^{ii}u_{k}\ell(b_{ii,k}-2\ell_{ik}u_{i}-2\ell_{i}u_{ik}-u_{k}-\ell_{k}u_{ii})\notag\\
&= \sum_{s}\hat{f}_{h_{s}}\Big(-A\ell|\nabla u|^{2}\ell_{s}-A\ell^{2}u_{k}u_{ks}+K(h_{ks}\zeta_{k}+h_{k}\zeta_{ks})-M\frac{h_{s}}{h}+L\zeta_{s}\Big)\notag\\
&\quad\quad+Au_{k}\ell\Big(\hat{f}_{k}+\hat{f}_{h}h_{k}+\hat{f}_{h_{s}}(\ell u_{sk}+u_{s}\ell_{k}+u_{k}\ell_{s}+u\delta_{sk}-\ell u\delta_{sk})\Big)-2A\ell \sum_{i}\sigma_{k}^{ii}u_{k}\ell_{ik}u_{i}\notag\\
&\quad\quad-2A\sum_{i}\sigma_{k}^{ii}u_{k}\ell_{i}(\ell u_{ik})-A\ell\sum_{i}\sigma_{k}^{ii}|\nabla u|^{2}-A\sum_{i}\sigma_{k}^{ii}u_{k}\ell_{k}(\ell u_{ii})\notag\\
&\quad\quad\geq -KC_{10}-AC_{11}-LC_{12}-MC_{13}-AC_{14}\sigma_{k}^{ii}-KC_{15}\underset{i}{\max}\lbd_{i}.
\end{align}
On the other hand, there holds
\begin{equation}\label{zr}
-\sum_{p,q,r,s}\sigma^{pq,rs}_{k}b_{pq;j}b_{rs;j}=-\sum_{p,q}\sigma^{pp,qq}_{k}b_{pp;j}b_{qq;j}+\sum_{p,q}\sigma^{pp,qq}_{k}b^{2}_{pq;j}.
\end{equation}
Then substituting \eqref{c27} and \eqref{zr} into \eqref{zx} and using \eqref{bjk}, \eqref{zx} becomes 
\begin{equation}
\begin{split}
\label{c26}
0&\geq \frac{1}{P_{m}}\sum_{j}\lambda^{m-1}_{j}\Big(-C_{0}(W)-C_{1}(W)b_{jj}-C_{2}(W)b^{2}_{jj}+W(\sigma_{k})_{j}^{2}\\
&-\sum_{p,q}\sigma^{pp,qq}_{k}b_{pp;j}b_{qq;j}+\sum_{p,q}\sigma^{pp,qq}_{k}b^{2}_{pq;j}\Big)+\frac{1}{P_{m}}(m-1)\sum_{i}\sigma^{ii}_{k}\sum_{j}\lambda^{m-2}_{j}b^{2}_{jj;i}\\
&\quad+\frac{1}{P_{m}}\sum_{i}\sigma^{ii}_{k}\sum_{p,q;p\neq q}\frac{\lambda^{m-1}_{p}-\lambda^{m-1}_{q}}{\lambda_{p}-\lambda_{q}}b^{2}_{pq;i}-\sum_{i}\frac{m\sigma^{ii}_{k}}{P^{2}_{m}}(\sum_{j}\lambda^{m-1}_{j}b_{jj;i})^{2}\\
&\quad +A\sum_{i}\sigma_{k}^{ii}(\ell^{2}u_{ki}u_{ki})+(-C_{16}K-C_{17}L-C_{18}A-M-M\frac{h_{i}^{2}}{h})\sigma_{k}^{ii}\\
&\quad-\tilde{C}-C_{19}K\underset{i}{\max}\lbd_{i},
\end{split}
\end{equation}
where the positive constant $\tilde{C}$ depends on $\underset{\Ctheta}{\min}h, ||h||_{C^{1}(\Ctheta)}, \underset{\Ctheta}{\min}\hat{f}, ||\hat{f}||_{C^{2}(\Ctheta)}$ and constants $M,A,K,L$. Moreover, the constants $C_{0}(W), C_{1}(W), C_{2}(W)$ depend on $\underset{\Ctheta}{\min}h, ||h||_{C^{1}(\Ctheta)}, \underset{\Ctheta}{\min}\hat{f}, ||\hat{f}||_{C^{2}(\Ctheta)}$ and positive constant $W$.

For the term $A\sum_{i}\sigma_{k}^{ii}(\ell^{2}u_{ki}u_{ki})$ in \eqref{c26}, we derive
\begin{align*}
A\sum_{i}\sigma_{k}^{ii}(\ell^{2}u_{ki}u_{ki})&=\sum_{i}\sigma_{k}^{ii}A(b_{ii}-2u_{i}\ell_{i}-u)^{2}\\
&\geq A\sum_{i}\sigma_{k}^{ii}(b_{ii}^{2}-4b_{ii}\ell_{i}u_{i}+4uu_{i}\ell_{i}-2ub_{ii})\\
&\geq A\sum_{i}\sigma_{k}^{ii}b_{ii}^{2}-C_{20}A-AC_{21}\sigma_{k}^{ii}.
\end{align*}
Now, if we take $-M\gg A\gg L\gg K\gg1$ while keeping the sign of the constant term $\tilde{C}$ unchanged, \eqref{c26} becomes 
\begin{equation}
\begin{split}
\label{c27-final}
0&\geq \frac{1}{P_{m}}\sum_{j}\lambda^{m-1}_{j}(-C_{0}(W)-C_{1}(W)b_{jj}-C_{2}(W)b^{2}_{jj}+W(\sigma_{k})^{2}_{j}\\
&-\sum_{p,q}\sigma^{pp,qq}_{k}b_{pp;j}b_{qq;j}+\sum_{p,q}\sigma^{pp,qq}_{k}b^{2}_{pq;j})\\
&\quad+\frac{1}{P_{m}}(m-1)\sum_{i}\sigma^{ii}_{k}\sum_{j}\lambda^{m-2}_{j}b^{2}_{jj;i}+\frac{1}{P_{m}}\sum_{i}\sigma^{ii}_{k}\sum_{p,q;p\neq q}\frac{\lambda^{m-1}_{p}-\lambda^{m-1}_{q}}{\lambda_{p}-\lambda_{q}}b^{2}_{pq;i}\\
&\quad-\sum_{i}\frac{m\sigma^{ii}_{k}}{P^{2}_{m}}(\sum_{j}\lambda^{m-1}_{j}b_{jj;i})^{2}+A\sum_{i}\sigma^{ii}_{k}b^{2}_{ii}-\tilde{C}-C_{19}K\lbd_{1},
\end{split}
\end{equation}
where $\lbd_{1}:=\underset{i}{\max}\lbd_{i}$.
Next we deal with the third-order derivatives. Denote
$$A_{i}=\frac{\lambda^{m-1}_{i}}{P_{m}}(W(\sigma_{k})^{2}_{i}-\sum_{p,q}\sigma^{pp,qq}_{k}b_{pp;i}b_{qq;i}),$$
$$B_{i}=\frac{2}{P_{m}}\sum_{j}\lambda^{m-1}_{j}\sigma^{jj,ii}_{k}b^{2}_{jj;i},$$
$$C_{i}=\frac{m-1}{P_{m}}\sigma^{ii}_{k}\sum_{j}\lambda^{m-2}_{j}b^{2}_{jj;i},$$
$$D_{i}=\frac{2}{P_{m}}\sum_{j\neq i}\sigma^{jj}_{k}\frac{\lambda^{m-1}_{j}-\lambda^{m-1}_{i}}{\lambda_{j}-\lambda_{i}}b^{2}_{jj;i},$$
$$E_{i}=\frac{m\sigma^{ii}_{k}}{P^{2}_{m}}(\sum_{j}\lambda^{m-1}_{j}b_{jj;i})^{2}.$$

We now turn to the application of a crucial lemma from \cite{jfa}*{Lemma 8, Lemma 9} (see also \cite{CabezasMorenoHu2025}*{Lemma 3.6, Lemma 3.7}) to estimate the third-order derivative terms. In particular, when $i=1$ and $i \neq 1$, the handling of the third-order derivatives follows the strategy developed in \cite{Gcpam}.
\begin{lemma}\label{h1}
For any $i\neq 1$, we obtain
\[
A_{i}+B_{i}+C_{i}+D_{i}-\left( 1+\frac{1}{m} \right)E_{i}\geq 0,
\]
for sufficiently large $m$.
\end{lemma}

\begin{lemma}\label{h2}
For $l=1,\ldots,k-1$. if there exist a positive constant $\delta\leq 1$ such that $\lambda_{l}/\lambda_{1}\geq \delta$.
Then there exist two sufficiently small positive constants $\eta,\delta^{'}$ depending on $\delta$, such that, if $\lambda_{l+1}/\lambda_{1}\leq \delta^{'}$, we obtain
$$
A_{1}+B_{1}+C_{1}+D_{1}-\left(1+\frac{\eta}{m} \right)E_{1}\geq 0,
$$
for sufficiently large $m$.
\end{lemma}
\end{proof}
Similar to \cite[Corollary 10]{jfa}, we have the following result.
\begin{lemma}\label{h3}
There exist two finite sequences of positive numbers $\{\delta_{j}\}^{k}_{j=1}$ and $\{\xi_{j}\}^{k}_{j=1}$ such that, if the following inequality holds for some index $1\leq s\leq k-1$,
$$
\frac{\lambda_{s}}{\lambda_{1}}\geq \delta_{s}, \ {\rm and} \ \frac{\lambda_{s+1}}{\lambda_{1}}\leq \delta_{s+1},
$$
then for large $W$, there holds
\begin{equation}\label{h4}
A_{1}+B_{1}+C_{1}+D_{1}-\left(1+\frac{\xi_{s}}{m} \right)E_{1}\geq 0.
\end{equation}
\end{lemma}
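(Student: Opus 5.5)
The plan is to obtain Lemma \ref{h3} by induction on $s$, choosing the thresholds $\delta_j$ from the top index downward. Lemma \ref{h2} is the engine of the induction, and Lemma \ref{h1} takes care of all indices $i\neq 1$. Set $\delta_1=1$, so that $\lambda_1/\lambda_1\geq\delta_1$ holds trivially. Suppose $\delta_1,\dots,\delta_s$ have been fixed. Apply Lemma \ref{h2} with $l=s$ and $\delta=\delta_s$; it produces constants $\eta_s,\delta'_s$ depending only on $\delta_s$ (and on $n,k$). We then define $\delta_{s+1}:=\min\{\delta'_s,\delta_s\}$ and $\xi_s:=\eta_s$. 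Since each choice depends only on the previous one, both sequences are finite and depend only on $n,k$.

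With these choices the conclusion is almost a restatement of Lemma \ref{h2}. Assume that for some $1\leq s\leq k-1$ we have $\lambda_s/\lambda_1\geq\delta_s$ and $\lambda_{s+1}/\lambda_1\leq\delta_{s+1}\leq\delta'_s$. Then the hypotheses of Lemma \ref{h2} with $l=s$ hold, so
$$A_1+B_1+C_1+D_1-\Big(1+\frac{\xi_s}{m}\Big)E_1\geq 0$$
for $m$ large. The requirement that $W$ be large enters only through $A_1$: $W$ must be large enough that the term $W(\sigma_k)_1^2$ dominates the negative part of $-\sum\sigma_k^{pp,qq}b_{pp;1}b_{qq;1}$. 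The proof of Lemma \ref{h2} (following \cite{jfa}*{Lemma 9}) shows that the required size of $W$ depends only on $\delta$, $n$, $k$ and the bounds on $\hat f$. Because only finitely many $\delta_s$ occur, we may take $W$ to be the maximum of these finitely many thresholds, and likewise $m$ as the maximum of the finitely many lower bounds.

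The main point needing care is uniformity. The constants $\eta,\delta'$ and the lower bounds on $m$ and $W$ in Lemma \ref{h2} must depend only on $\delta$ and the structural data, and not on the point $\xi_0$ or on $\lambda_1$. To ensure this, I would recheck how the proof of \cite{jfa}*{Lemma 9} uses the equation $\sigma_k=\hat f$ at $\xi_0$, namely through concavity of $\sigma_k^{1/k}$ and $\sigma_k^{ii}\geq c\,\sigma_{k-1}$ bounds in terms of $\min\hat f$.

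One further remark explains why this lemma suffices for the global argument. The chain $\delta_1=1\geq\delta_2\geq\cdots$ always has some $s\leq k-1$ where the first condition holds and the second holds, unless $\lambda_k/\lambda_1\geq\delta_k$. In that remaining case $\sigma_k\geq c\lambda_1^k$, which directly bounds $\lambda_1$ by $\hat f$. That case is handled outside this lemma in the $C^2$ estimate.
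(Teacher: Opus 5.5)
Your proposal is correct and matches the paper's proof: an induction in which Lemma \ref{h2}, applied with $l=s$ and $\delta=\delta_s$, produces $\delta_{s+1}$ and $\xi_s$. The differences are cosmetic (the paper takes $\delta_1=1/2$ and $\delta_{s+1}=\min\{\delta_1,\delta'_{s+1}\}$, you take $\delta_1=1$ and $\delta_{s+1}=\min\{\delta'_s,\delta_s\}$), and your remarks on choosing $W$ and $m$ uniformly over the finitely many thresholds spell out a point the paper leaves implicit.
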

\begin{proof}
We proceed by induction to construct the sequences $\{\delta_{j}\}_{j=1}^{k}$ and $\{\xi_{j}\}_{j=1}^{k}$. To initiate the induction, we set $\delta_{1} = 1/2$, which clearly satisfies $\lambda_{1}/\lambda_{1} = 1 > \delta_{1}$. Consequently, the assertion for $j=1$ follows directly from Lemma \ref{h2}.

Suppose now that $\delta_{s}$ has been defined for all $1 \leq s \leq k-1$. In order to determine $\delta_{s+1}$, we apply Lemma \ref{h2} with parameters $l = s$ and $\delta = \delta_{s}$. This yields a positive constant $\delta'_{s+1}$ such that whenever $\lambda_{s+1} \leq \delta'_{s+1} \lambda_{1}$, the estimate \eqref{h4} holds for some $\xi_{s}$. We then choose $\delta_{s+1} := \min\{\delta_{1}, \delta'_{s+1}\}$, ensuring that \eqref{h4} is valid under the condition $\lambda_{s+1} \leq \delta_{s+1} \lambda_{1}$. Hence both $\delta_{s+1}$ and $\xi_{s}$ are determined inductively.
\end{proof}
\begin{proof}[\bf Proof of Theorem \ref{thm2}.]
To establish the desired result, we distinguish two cases.

{\bf Case I:} Suppose there exists an index $1 \leq s \leq k-1$ and positive constants $\{\delta_{j}\}_{j=1}^{k}$ such that
$$
\lambda_{s} \geq \delta_{s} \lambda_{1} \quad \text{and} \quad \lambda_{s+1} \leq \delta_{s+1} \lambda_{1}.
$$
Then, by combining Lemma \ref{h1} with Lemma \ref{h3}, we obtain
\begin{equation}\label{Pi}
\sum_{i}(A_{i}+B_{i}+C_{i}+D_{i})-E_{1}-\left(1+\frac{1}{m} \right)\sum^{n}_{i=2}E_{i}\geq 0.
\end{equation}

Recalling the definitions of $A_{i}$, $B_{i}$, $C_{i}$, $D_{i}$, and $E_{i}$, and substituting \eqref{Pi} into \eqref{c27-final}, we arrive at
\begin{equation}
\begin{split}
\label{Qi}
0 &\geq \frac{1}{P_{m}}\sum_{j}\lambda^{m-1}_{j}\left(-C_{0}(W)-C_{1}(W)b_{jj}-C_{2}(W)b^{2}_{jj}\right) \\
&\quad + \sum^{n}_{i=2}\frac{\sigma^{ii}_{k}}{P^{2}_{m}}\left(\sum_{j}\lambda^{m-1}_{j}b_{jj;i}\right)^{2} + A\sum_{i}\sigma^{ii}_{k}b^{2}_{ii} - \tilde{C}-C_{19}K\lbd_{1},\\
&\geq -\frac{\tilde{C}_{0}(W)}{\lambda_{1}} - \tilde{C}_{1}(W) - \tilde{C} - \tilde{C}_{2}(W)\lambda_{1} + A\sigma^{11}_{k}b^{2}_{11}-C_{19}K\lbd_{1}.
\end{split}
\end{equation}
Applying the Newton--MacLaurin inequality once again yields
\begin{equation}\label{Mac}
\left[\frac{\sigma_{k-1}(\lambda|1)}{\binom{n-1}{k-1}}\right]^{\frac{1}{k-1}}
\geq
\left[\frac{\sigma_{k}(\lambda|1)}{\binom{n-1}{k}}\right]^{\frac{1}{k}}.
\end{equation}

Consequently, from \eqref{Mac}, there exists a positive constant $C_{n,k}$, depending only on $n$ and $k$, such that
\begin{equation}
\label{c28}
\sigma_{k}(\lambda|1)
\leq C_{n,k} \sigma_{k-1}(\lambda|1)^{\frac{k}{k-1}}
\leq C_{n,k} \lambda_{1} \sigma_{k-1}(\lambda|1),
\end{equation}
where the last inequality follows from $\lambda_1 \geq \lambda_j$ for all $j$.

Substituting \eqref{c28} into the decomposition $\sigma_{k}(\lambda)=\sigma_{k}(\lambda|1)+\lambda_{1}\sigma_{k-1}(\lambda|1)$, we obtain
\begin{equation*}
\lambda_{1}\sigma_{k-1}(\lambda|1) \geq C_{n,k} \sigma_{k}(\lambda).
\end{equation*}

Thus, we have
\begin{equation*}
\label{maxw11}
\frac{\sigma_{k}^{11} b_{11}^{2}}{\sigma_{k}}
= \frac{\sigma_{k-1}(\lambda|1) \lambda_{1}^{2}}{\sigma_{k}}
\geq \frac{C_{n,k} \sigma_{k} \lambda_{1}}{\sigma_{k}}
= C_{n,k} b_{11}.
\end{equation*}
Hence,
\begin{equation}\label{Ti}
\sigma^{11}_{k} b^{2}_{11} \geq C^{*} b_{11},
\end{equation}
where the positive constant $C^{*}$ depends on $n$, $k$, $\underset{\Ctheta}{\min}h$, $\|h\|_{C^{1}(\mathcal{C}_\theta)}$, $\underset{\Ctheta}{\min}\hat{f}$, and $\|\hat{f}\|_{C^{2}(\mathcal{C}_\theta)}$.

Now inserting \eqref{Ti} into \eqref{Qi} and choosing
\[
A := \frac{\tilde{C}_{2}(W) + C_{19}K + 1}{C^{*}},
\]
we arrive at
\begin{equation}
\label{Ri}
\begin{aligned}
0 &\geq -\frac{\tilde{C}_{0}(W)}{\lambda_{1}}
      -\tilde{C}_{1}(W)
      -\tilde{C}
      -\tilde{C}_{2}(W)\lambda_{1}
      - C_{19}K \lambda_{1}
      + C^{*} A \lambda_{1} \\
&\geq -\frac{\tilde{C}_{0}(W)}{\lambda_{1}}
      -\tilde{C}_{1}(W)
      -\tilde{C}
      + \lambda_{1}.
\end{aligned}
\end{equation}
Therefore, whenever $\lambda_{1}$ is sufficiently large, \eqref{Ri} implies
$$
\lambda_{1} \leq C
$$
for some positive constant $C$.

{\bf Case II:} $\lambda_{k}\geq \delta_{k}\lambda_{1}$ with $\delta_{k}>0$. Since $\lambda_{1}\geq \lambda_{2}\geq \ldots \geq \lambda_{k}\geq \delta_{k}\lambda_{1}$,
$$
\hat{f}=\sigma_{k}>\lambda_{1}\ldots \lambda_{k}\geq \delta^{k-1}_{k}\lambda^{k}_{1},
$$
which also yields $\lambda_{1}\leq C$ for a positive constant $C$. Hence the proof of Theorem \ref{thm2} is complete.

\end{proof}

\begin{theorem}
Let $\theta \in (0,\frac{\pi}{2})$, let $h$ be a positive, even, capillary, strictly convex solution of Eq. \eqref{eq:capillary-Lp-dual-CM}. Then for any $\ga \in (0,1)$, there exists a positive constant $C$ depending only on $k, p,q, \underset{\Ctheta}{\min}f$ and $||f||_{C^{3}(\Ctheta)}$ such that
\begin{equation}\label{higher}
||h||_{C^{4,\ga}(\Ctheta)}\leq C.
\end{equation}
\end{theorem}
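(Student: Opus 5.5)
The plan is to assemble the a priori estimates already established and then bootstrap with elliptic regularity for the Robin problem.

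\emph{Step 1: $C^0$ and $C^1$ bounds.} Since $h$ is even, smooth and strictly convex, Lemma \ref{lem3} gives $1/C\le h\le C$ and $|\nabla h|\le C$. This step uses the standing range $1<p<q\le k+1$, so that Lemma \ref{lem2} and Proposition \ref{noncollapsing} apply. It follows that $\rho^2=h^2+|\nabla h|^2$ lies between two positive constants. Hence the right-hand side $\hat f(\xi,h,\nabla h)=fh^{p-1}\rho^{k+1-q}$ is bounded above and below by positive constants. Its $C^2$ norm, as a function of $(\xi,h,\nabla h)$ on the relevant compact set, is controlled by $\|f\|_{C^2}$, $p$, $q$ and $k$.

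\emph{Step 2: $C^2$ bound and uniform ellipticity.} Theorem \ref{thm2} applies to \eqref{general} with this $\hat f$ and gives $\Delta h+nh\le C$. Since $b=\nabla^2h+hI$ is positive definite, every eigenvalue satisfies $0<\lambda_i\le C$, so $\|h\|_{C^2}\le C$. Because $\sigma_k(b)=\hat f\ge c>0$ and the eigenvalues are bounded above, the operator $F=\sigma_k^{1/k}$ is uniformly elliptic along the solution. Concavity of $F$ on $\Gamma_k$ holds, and the eigenvalues stay in a compact subset of $\Gamma_k$; this follows from the upper bound together with the positive lower bound on $\sigma_k$ via Newton--Maclaurin.

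\emph{Step 3: $C^{2,\alpha}$ and bootstrap.} Write the problem as $F(\nabla^2h+hI)=\hat f^{1/k}(\xi,h,\nabla h)$ with the oblique (Robin) condition $\nabla_\mu h-\cot\theta h=0$ on $\partial\Ctheta$. This is a concave, uniformly elliptic, fully nonlinear equation with a linear oblique boundary condition on a smooth compact domain. Its right-hand side is Lipschitz in $\xi$ once $h\in C^{1,1}$. The Evans--Krylov theory for oblique derivative problems (Lieberman--Trudinger) therefore yields $\|h\|_{C^{2,\alpha}}\le C$. Differentiating the equation, the derivatives of $h$ satisfy linear uniformly elliptic equations with $C^{\alpha}$ coefficients. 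Tangential derivatives along $\partial\Ctheta$ inherit oblique boundary conditions, by commuting with the Robin condition using Lemma \ref{lem1}; alternatively one passes to $u=h/\ell$, which satisfies the Neumann condition $\nabla_\mu u=0$. Schauder estimates for oblique problems then give $C^{3,\alpha}$. One more iteration gives $C^{4,\gamma}$, using $\|f\|_{C^3}$, since the right-hand side must be $C^{2,\gamma}$ to reach $C^{4,\gamma}$.

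\emph{Main obstacle.} I expect the boundary $C^{2,\alpha}$ step to be hardest. The Evans--Krylov estimate near the boundary requires the oblique-boundary version. I would invoke Lieberman--Trudinger directly after rewriting in terms of $u$, so that the boundary condition becomes a pure Neumann condition. The equation for $u$ remains concave in $\nabla^2 u$, since $\ell>0$ enters linearly. One must also note that $\hat f$ depends on $\nabla h$. This dependence is harmless once $C^{1,1}$ bounds hold, because the right-hand side is then Lipschitz in $\xi$, which is what Evans--Krylov requires.
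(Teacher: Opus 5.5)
Your proposal is correct and follows the same route as the paper. You take the $C^0$/$C^1$ bounds from Lemma \ref{lem3} (which tacitly needs $1<p<q\le k+1$, as you note) and the $C^2$ bound from Theorem \ref{thm2}, then apply the Lieberman--Trudinger $C^{2,\alpha}$ estimate for concave equations with oblique boundary conditions and bootstrap to $C^{4,\gamma}$. The paper compresses all of this into a single citation, so your checks of uniform ellipticity, of the compactness of the eigenvalue range in $\Gamma_k$, and of the needed regularity of $f$ only fill in details the paper leaves implicit.
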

\begin{proof}
Together with Lemma \ref{lem3} and Theorem \ref{thm2}, we conclude that
$$||h||_{C^{2}(\Ctheta)}\leq C.$$
By applying the theory of fully nonlinear second-order uniformly elliptic equations with oblique derivative boundary condition \cite{LT}*{Theorem1.1}, we obtain a $C^{2,\ga}$($\ga \in (0,1)$) estimate for $h$, along with the higher-order estimates as in \eqref{higher}.
\end{proof}

\section{Existence of solutions}\label{sec 4}
In this section, we use a degree theory argument as in \cite{degree1,degree2} to complete the proof of Theorem \ref{thm1}, following the approach of \cite{g1,GuanMaZhou2006,g2}.
\begin{proof}[\textbf{Proof of Theorem  \ref{thm1}}]
For $t \in [0, 1]$, we consider a family of equations
 $$\sigma_{k}(\nabla^{2}h+hI)=f_{t}h^{p-1}(|\nabla h|^{2}+|h|^{2})^{\ga_{t}},$$
where 
$$f_{t}:=\left((1-t)\Big(\frac{\ell^{p-1}}{\binom{n}{k}}\Big)^{\frac{1}{p+k-1}}+tf^{-\frac{1}{p+k-1}}\right)^{-(p+k-1)},$$
and
$$\ga_{t}:=\frac{k+1-q_{t}}{2},\quad q_{t}:=k+1-t(k+1-q).$$
For $(h, t)$ with $h\in C^{l+2,\ga}_{\rm{even}}(\Ctheta)$, where $l\geq 0$ and $C^{l+2,\ga}_{\rm{even}}(\Ctheta)$ is the subset of even functions in $C^{l+2,\ga}(\Ctheta)$, we consider the following problem
\begin{equation*}\label{existence}
		\renewcommand{\arraystretch}{1.5}
		\left\{
		\begin{array}{rll}
		F(h,t)&=\sigma_{k}(\nabla^{2}h+hI)-f_{t}h^{p-1}(|\nabla h|^{2}+|h|^{2})^{\ga_{t}},\quad & \text{in}\quad\Ctheta,\\
		G(h,t)&=\nabla_{\mu}h-\cot\theta h,\quad & \text{on} \quad\pa\Ctheta. 
		\end{array}	
			\right.
	\end{equation*}
Now, let $\hat{R}>0$ be fixed, define $\mathcal{O}\subset C^{l+2,\ga}_{\rm{even}}(\Ctheta)$ as
$$\mathcal{O}=\{h \in C^{l+2,\ga}_{\rm{even}}(\Ctheta): ||h||_{C^{l+2,\ga}_{\rm{even}}(\Ctheta)}\leq \hat{R},\quad \nabla^{2}h+hI>0\}.$$	
From \eqref{higher} and Theorem \ref{full rank}, we know that if $\hat{R}$ is sufficiently large, 
$$\Big(F(h,t), G(h,t)\Big)\neq (0,0)\quad \text{for all}\quad (h,t) \in \pa\mathcal{O}\times [0,1].$$
Then using \cite{degree1}*{Theorem 1}, for each $t \in [0,1]$, there is a well-defined integer-valued degree as follows 
$$\text{deg}\Big(F(\cdot,0), G(\cdot,0), \mathcal{O}, 0\Big)=\text{deg}\Big(F(\cdot,t), G(\cdot,t), \mathcal{O}, t\Big).$$ 
By \cite{mei2025prescribedlpcurvatureproblem}*{Theorem 5.4}, we know that $h_{0}=\ell$ is the unique capillary even solution to $\Big(F(h,0), G(h,0)\Big)=(0,0)$. By \cite{mei2025prescribedlpcurvatureproblem}*{Lemma 5.6}, the linearized operator $\mathcal{L}:=D_{h}(F,G)(h_{0},0)$ has a trivial kernel and $\mathcal{L}$ is invertible. Thus from \cite{degree1}*{Theorem 1, Corollary 2.1}, we conclude that
$$\text{deg}\Big(F(\cdot,t), G(\cdot,t), \mathcal{O}, t\Big)\neq 0,\quad \forall t\in[0,1].$$
This implies that there exists $h\in \mathcal{O}$ such that
$$\Big(F(h,t), G(h,t)\Big)= (0,0) \quad \text{for all}\ t \in [0,1],$$
and in particular for $t=1$. 

Finally, we verify that $f_{t}$ satisfies conditions \eqref{23} and \eqref{24} to complete the proof of Theorem \ref{thm1}. Following a similar argument in \cite{mei2025prescribedlpcurvatureproblem}, we prove 
\begin{equation}\label{tt}
\nabla^{2}\ell^{\tilde{\lbd}}+\ell^{\tilde{\lbd}}\delta>0, \quad \forall \xi \in \Ctheta,
\end{equation}
where $\tilde{\lbd}:=\frac{p-1}{p+k-1}$. Fixing a point $\xi\in\Ctheta$, we can choose an orthonormal tangential frame $\{e_{i}\}_{i=1}^{n}$ around $\xi$, such that at $\xi$, $e_{i}\in T_{\xi}P$ ($1\leq i\leq n-1$) are the tangential vectors of level set $P:=\{\eta\in \Ctheta: \ell(\eta)=\ell(\xi)\}$, $e_{n}$ is the unit outward co-normal of $P\subset \Ctheta$. Then $\ell_{i}(\xi)=0$ for $1\leq i\leq n-1$, and $\ell_{n}=\cos\theta  \langle e_{n}, e\rangle:=\cos\theta \sin \widehat{\theta}$, which implies
\begin{equation}\label{l-lam}
\ell(\xi)=\sin^{2}\theta+\cos\theta \langle \xi , e\rangle=1-\cos\theta \cos\widehat{\theta}\geq \sin^{2}\widehat{\theta}.
\end{equation}
Combining \eqref{l-lam}, we obtain 
 \begin{align*}
     (\ell^{\tilde{\lambda}})_{ij}+\ell^{\tilde{\lambda}}\delta_{ij}&=\left(\tilde{\lambda}\ell^{\tilde{\lambda}-1}+(1-\tilde{\lambda})\ell^{\tilde{\lambda}}\right)\delta_{ij}+\tilde{\lambda}(\tilde{\lambda}-1)\ell^{\tilde{\lambda}-2}\ell_{i}\ell_{j}\\
     &\geq\tilde{\lambda} \ell^{\tilde{\lambda}-2} \left(\ell +(\tilde{\lambda}-1)\ell_{i}\ell_{j}   \right) \\ 
     &\geq \tilde{\lambda} \ell^{\tilde{\lambda}-2}  \left(\sin^{2}\widehat{\theta}+(\tilde{\lambda}-1)\cos^{2}\theta \sin^{2}\widehat{\theta} \right)\delta_{ij}>0,
 \end{align*}
 which shows that \eqref{tt} holds, and hence $f_{t}$ satisfies condition \eqref{23}. On the other hand, we can check that $f_{t}$ satisfies the boundary condition \eqref{24}. 

 \end{proof}
\section*{Conflict of Interest Statement}
The authors declare that they have no conflict of interest.

\section*{Data Availability Statement}
No datasets were generated or analyzed during the current study.
 
\section*{References}


\end{document}